\newtheorem{corollary}{Corollary}
\newtheorem*{corollary*}{Corollary}
\newtheorem{lemma}{Lemma}
\newtheorem*{lemma*}{Lemma}
\newtheorem{proposition}{Proposition}
\newtheorem*{theorem*}{Theorem} 
\newtheorem*{theorem1}{Theorem 1} 
\newtheorem*{theorem2}{Theorem 2} 
\newtheorem{theorem}{Theorem}
\theoremstyle{definition}
\newtheorem{definition}{Definition}
\newtheorem{example}{Example}
\newtheorem{question}{Question}
\newtheorem{remark}{Remark}
\crefname{lemma}{Lemma}{Lemma}
\crefname{proposition}{Proposition}{Proposition}
\crefname{question}{Question}{Question}
\crefname{remark}{Remark}{Remark}
\crefname{theorem}{Theorem}{Theorem}
\newcommand{\xto}[1]{\stackrel{#1}{\to}}
\newcommand{\reals}{\mathbb{R}}
\newcommand{\ball}{\mathbb{B}}
\newcommand{\disk}{\mathbb{D}}
\newcommand{\plane}{\mathbb{C}}
\newcommand{\dirichlet}{\mathcal{D}}
\renewcommand{\natural}{\mathbb{N}}
\newcommand{\torus}{\mathbb{T}}
\newcommand{\sphere}{\mathbb{S}}
\newcommand{\Hilbert}{\mathcal{H}}
\newcommand{\rand}{\partial}
\renewcommand{\phi}{\varphi}
\renewcommand{\bar}{\overline}
\newcommand{\la}{\langle}
\newcommand{\ra}{\rangle}
\DeclareMathOperator{\Fact}{Fact}
\DeclareMathOperator{\Mult}{Mult}
\DeclareMathOperator{\Comp}{Comp}
\DeclareMathOperator{\Int}{Int}
\title{Boundary values via reproducing kernels: \\
	The Julia-Carathéodory theorem}
\author{Frej Dahlin}
\begin{document}
\maketitle
\begin{abstract}
Given a reproducing kernel $k$ on a nonempty set $X$, we define the reproductive
boundary of $X$ with respect to $k$. Furthermore, we generalize the well known
nontangential and horocyclic approach regions of the unit circle to this new
kind of boundary. We also introduce the concept of a composition factor of
$k$, an abstract analogue of analytic selfmaps of the unit disk. Using these
notions, we obtain a far reaching generalization of the Julia-Carathéodory
theorem, stated on an arbitrary set. We also prove Julia's lemma in the abstract
setting and give sufficient conditions for the convergence of iterates of some
selfmaps. As an application we improve the classical theorem on the unit disk
for contractive multipliers of standard weighted Dirichlet spaces, as well as
Besov spaces on the unit ball. Many examples and questions are provided for
these novel objects of study.
\end{abstract}

\tableofcontents

\section{Introduction}
In this paper $\disk$ denotes the unit disk in the complex plane $\plane$ and
$\torus$ its boundary, the unit circle. We recall the classical Julia-Carathéodory
theorem.
\begin{theorem*}[Julia-Carathéodory]

Given a point
$\zeta\in\torus$ and an analytic selfmap of the unit disk $\phi:\disk\to\disk$,
then the following are equivalent:
\begin{enumerate}[label=(\roman*)]
\item \[
	\liminf_{z\to\zeta}\frac{1-|\phi(z)|^2}{1-|z|^2} = c < \infty.
\]
\item There is a $\lambda\in\torus$ such that the difference quotient \[
	\frac{1-\phi(z)\overline\lambda}{1-z\overline\zeta}
\]
has nontangential limit $c$ at $\zeta$.
\item $\phi$ and its derivative $\phi'$ has nontangential limits equal to
$\lambda\in\torus$ and $c\lambda\overline\zeta$ at $\zeta$, respectively.
\end{enumerate}
Furthermore, if any of the above hold, then $c>0$ and $\phi$ has horocyclic limit $\lambda$ at $\zeta$, which is the same in (ii)
and (iii), in the sense that $\phi(z_n)\to\lambda$ along any sequence $z_n\to\zeta$
constrained to a fixed set $E(M,\zeta) = \{z\in\disk:|\zeta-z|^2\leq M(1-|z|^2)\},\,M>0$.
\end{theorem*}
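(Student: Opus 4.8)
The plan is to run the classical argument with the Schwarz-Pick lemma as the one nontrivial input, but to harvest all of the boundary limits at once from the Herglotz-Nevanlinna representation of a Pick function; this also makes the otherwise delicate horocyclic statement fall out cleanly. I use Schwarz-Pick in the form $\frac{(1-|\phi(z)|^2)(1-|\phi(w)|^2)}{|1-\overline{\phi(w)}\phi(z)|^2}\ge\frac{(1-|z|^2)(1-|w|^2)}{|1-\bar wz|^2}$. Note at once that $\phi$ must be nonconstant whenever any of (i)--(iii) holds (a constant selfmap makes the quotient in (i) blow up and the quotient in (ii) tend to $\infty$, and has an interior nontangential limit), and that $c>0$: fixing any $w\in\disk$ and letting $z\to\zeta$ in Schwarz-Pick bounds the liminf below by a positive constant depending on $w$.

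First, Julia's lemma. Assume (i), pick $z_n\to\zeta$ realising the liminf, and pass, by compactness of $\overline{\disk}$, to a subsequence with $\phi(z_n)\to\lambda$; since $1-|z_n|^2\to0$ while the quotient stays bounded, $|\lambda|=1$. Putting $w=z_n$ in Schwarz-Pick and letting $n\to\infty$ yields $\frac{|1-\bar\lambda\phi(z)|^2}{1-|\phi(z)|^2}\le c\,\frac{|1-\bar\zeta z|^2}{1-|z|^2}$ for all $z\in\disk$, that is, $\phi(E(M,\zeta))\subseteq E(cM,\lambda)$ for every $M>0$.

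Next, the hard part: the horocyclic conclusion, which I extract before the nontangential limit since the latter is a special case of it. Julia's lemma alone will not do, because a sequence $z_n\to\zeta$ inside a fixed $E(M,\zeta)$ may approach $\zeta$ tangentially, whereas Julia's lemma only traps $\phi(z_n)$ inside the fixed horodisk $E(cM,\lambda)$, which a sequence need not leave. Instead, let $\sigma(z)=i\frac{\zeta+z}{\zeta-z}$ and $\tau(w)=i\frac{\lambda+w}{\lambda-w}$ be the Cayley maps carrying $\zeta,\lambda$ to $\infty$, and put $\Phi=\tau\circ\phi\circ\sigma^{-1}$, a nonconstant holomorphic selfmap of $\mathbb{H}$, hence a Pick function with representation $\Phi(u)=bu+a+\int_\reals\!\bigl(\tfrac1{t-u}-\tfrac t{1+t^2}\bigr)\,d\mu(t)$ where $b\ge0$, $a\in\reals$ and $\mu\ge0$ is a finite measure. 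The identity $\operatorname{Im}\sigma(z)=\frac{1-|z|^2}{|1-\bar\zeta z|^2}$ and its analogue for $\tau$ show Julia's lemma to be exactly the inequality $\operatorname{Im}\Phi(u)\ge\frac1c\operatorname{Im}u$, so that $b=\lim_{y\to\infty}\operatorname{Im}\Phi(iy)/y\ge\frac1c>0$. Now, for $u$ tending to $\infty$ inside any horodisk $\{\operatorname{Im}u>s_0\}$ we have $|t-u|\ge\operatorname{Im}u>s_0$, so $\bigl|\int(t-u)^{-1}d\mu(t)\bigr|\le\mu(\reals)/s_0$ stays bounded; dividing the representation by $u$ then gives $\Phi(u)/u\to b$, and since $b>0$ and $|u|\to\infty$, $\Phi(u)\to\infty$. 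Transported through $\sigma$ and $\tau$, this says exactly that $\phi(z)\to\lambda$ as $z\to\zeta$ inside any $E(M,\zeta)$; and since every Stolz angle at $\zeta$ lies in some $E(M,\zeta)$, the nontangential limit $\phi\to\lambda$ follows in particular.

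Finally, the equivalences. With $\phi\to\lambda$ nontangentially in hand, the M\"obius asymptotics $\sigma(z)\sim\frac{2i\zeta}{\zeta-z}$ and $\tau(w)\sim\frac{2i\lambda}{\lambda-w}$ turn $\Phi(u)/u\to b$ into $\frac{1-\bar\zeta z}{1-\bar\lambda\phi(z)}\to b$ nontangentially, whence the difference quotient in (ii) tends to $\tfrac1b$; a radial computation with Julia's lemma then identifies $\tfrac1b$ with the liminf $c$ of (i), so (i) gives (ii) with constant $c=\tfrac1b$. Differentiating the representation, $\Phi'(u)=b+\int(t-u)^{-2}d\mu(t)\to b$ nontangentially by dominated convergence, and unwinding the chain rule for $\Phi=\tau\circ\phi\circ\sigma^{-1}$ converts this into $\phi'\to c\lambda\bar\zeta$ nontangentially, which is (iii). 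Conversely (ii) gives (i), since (ii) forces $b=\tfrac1c$ with $c$ real and positive, hence $b>0$; and (iii) gives (ii) by writing $\phi(z)-\lambda=\int_w^z\phi'$ along a path interior to a fixed Stolz angle and letting $w\to\zeta$. Then $c>0$ and the agreement of the value $\lambda$ in (ii) and (iii) are immediate. I expect the horocyclic step to be the genuine obstacle; the remaining care-points are the chain-rule bookkeeping that lands exactly on $c\lambda\bar\zeta$ and the standard passage from a nontangential limit of $\phi'$ to one of the difference quotient.
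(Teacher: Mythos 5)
Your Schwarz--Pick/Julia part and the nontangential equivalences are essentially the standard classical argument and are fine in outline, but the step you yourself single out as the crux --- the horocyclic limit --- has a genuine gap. You pair the Nevanlinna kernel $\frac1{t-u}-\frac{t}{1+t^2}$ with a \emph{finite} measure $\mu$; in that normalization the representing measure is only required to satisfy $\int(1+t^2)^{-1}d\mu<\infty$ (the finite-measure version uses the kernel $\frac{1+tu}{t-u}$ instead), so the bound $\bigl|\int(t-u)^{-1}d\mu\bigr|\le\mu(\reals)/s_0$ is not available. Worse, the conclusion you draw from it, namely $\Phi(u)/u\to b$ as $u\to\infty$ in a half-plane $\{\operatorname{Im}u>s_0\}$, is simply false for general Pick functions: take $\Phi(u)=u+\sum_{n\ge1}2^{-n}\frac{1+4^nu}{4^n-u}$, a Pick function with angular derivative $b=1$ at infinity, and evaluate at $u_n=4^n+is_0$; the $n$-th summand has modulus comparable to $8^n/s_0$ while $|u_n|\approx 4^n$, so $\Phi(u_n)/u_n\to\infty$. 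The dominated-convergence estimate you invoke is valid only in Stolz angles at infinity (where $|t-u|\gtrsim|u|$ uniformly), so your method does deliver the nontangential limits in (ii)--(iii), but not the horocyclic statement --- which, as you correctly observed, does not follow from Julia's lemma alone and is exactly the part needing a new idea. Since the rest of your write-up takes the horocyclic convergence as established (you even derive the nontangential limit of $\phi$ from it), this is a real hole rather than a cosmetic slip.

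For comparison, the paper does not argue classically at all: it runs Sarason's reproducing-kernel proof (generalized in its main theorem). There the horocyclic limit comes from the fact that $q_\zeta(z)=\frac{1-\bar\lambda\phi(z)}{1-\bar\zeta z}$ lies in $\Hilbert(k^\phi)\subset H^2$, together with the little-o growth bound $f(z)=o\bigl((1-|z|^2)^{-1/2}\bigr)$ for fixed $f\in H^2$ (the paper's growth lemma): on $E(M,\zeta)$ one has $|1-\bar\zeta z|^2\le M(1-|z|^2)$, hence $|1-\bar\lambda\phi(z)|\le M^{1/2}\,|q_\zeta(z)|\,(1-|z|^2)^{1/2}\to0$, which is precisely the horocyclic convergence. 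If you want to stay on your classical route you need an argument of comparable strength for sequences tending to $\zeta$ tangentially inside a horodisk (boundedness of the Julia quotient only traps $\phi$ in a fixed horodisk $E(cM,\lambda)$, and Lindel\"of-type results do not apply to such sequences); as written, that argument is missing.
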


The condition (i) was considered by Julia \cite{julia} in 1920, and his contribution
surrounding it is often referred to as \emph{Julia's lemma}. Specifically, he proved that (i)
implies the existence of $\lambda\in\torus$ such that
\begin{equation}\label{eq:julias-lemma}
	\frac{|1-\phi(z)\bar\lambda|^2}{1-|\phi(z)|^2}
	\leq c\frac{|1-z\bar\zeta|^2}{1-|z|^2}.
\end{equation}
In other words, $\phi(E(M,\zeta))\subset E(cM,\lambda)$, which proves that $\phi$ has
horocyclic limit $\lambda$ at $\zeta$. Moreover, if equality holds in
\eqref{eq:julias-lemma} for some $z\in\disk$, then $\phi$ is an automorphism.
\\ In 1929 Carathéodory \cite{caratheodory} completed the above equivalences
showing that (iii), $\phi$ has an angular derivative. This classical
object is still being studied today, see for example the recent note
\cite{bergman_angular}.

Generalizations of the Julia-Carathéodory theorem typically extend it to different
domains in several variables. For example, the unit ball $\ball_d$ in $\plane^d$
by Rudin \cite{rudin_unit-ball}, and the polydisk $\disk^d$ initiated by Jafari,
but completed by Abate \cite{jafari_polydisk, abate_polydisk}.
One should also mention the recent work in the bidisk by Agler, McCarthy, and Young, as
well as Jury and Tsikalas
\cite{agler-mccarthy_bidisk, jury-tsikalas_denjoy-wolff}, which exploit the special structure
of this domain. Those methods yield results of considerable depth, but do not seem to
extend to higher dimensions.

The results mentioned have all been obtained using geometric considerations.
However, Sarason discovered a proof of the classical Julia-Carathéodory
theorem using the theory of Hilbert spaces that have a reproducing kernel
\cite{sarason_julia-caratheodory, sarason}.

These are Hilbert spaces of
functions on some nonempty set $X$, $f:X\to\plane$ such that for every $x\in
X$ the point evaluation functional $f\mapsto f(x)$ is bounded. By the Riesz
representation theorem there corresponds to every $x\in X$ a \emph{kernel function} $k_x$ such
that $f(x) = \la f,k_x\ra$. The two variable function
$k(x,y) = \la k_y, k_x \ra $ is called the \emph{reproducing kernel} and we denote
its corresponding Hilbert space by $\Hilbert(k)$.
All of the required theory of reproducing kernels is recalled in \S2.1.

We shall state Sarason's version of the Julia-Carath\'eodory theorem,
but first note that for the Szeg\H{o} kernel $k(z,w)=(1 - z\bar w)^{-1}$ on
$\disk$ and any analytic selfmap
$\phi:\disk\to\disk$, then \[
	k^\phi(z,w) = \frac{1 - \phi(z)\bar{\phi(w)}}{1 - z\bar w}
\]
defines a reproducing kernel on $\disk$ and its corresponding Hilbert space $\Hilbert(k^\phi)$,
are the well known de Branges-Rovnyak spaces.
\begin{theorem*}[Sarason]
Given a point $\zeta\in\torus$ and an analytic selfmap $\phi$ of $\disk$
$\phi:\disk\to\disk$, then the following are equivalent:
\begin{enumerate}[label=(\roman*)]
\item \[
	\liminf_{z\to\zeta}\frac{1-|\phi(z)|^2}{1-|z|^2} = c < \infty,
\]
\item There is a $\lambda\in\torus$ such that the function defined by\[
	q_\zeta(z) = \frac{1-\phi(z)\overline\lambda}{1-z\overline\zeta}
\]
belongs to $\Hilbert(k^\phi)$.
\item All functions in $\Hilbert(k^\phi)$ have nontangential limits at $\zeta$.
\end{enumerate}
Furthermore, if any of the above hold, then $\phi$ has horocyclic limit $\lambda$ at $\zeta$ and
$\|q_\zeta\|^2=c$.
\end{theorem*}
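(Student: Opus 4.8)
The plan is to argue entirely inside $\Hilbert(k^\phi)$ using its kernel, which I write $\kappa_z:=k^\phi_z$; then $\|\kappa_z\|^2=k^\phi(z,z)=(1-|\phi(z)|^2)/(1-|z|^2)$ and $f(z)=\langle f,\kappa_z\rangle$, so (i) says exactly $\liminf_{z\to\zeta}\|\kappa_z\|^2=c$. All the estimates are driven by the elementary identity $1-|\phi(z)|^2=2\operatorname{Re}\bigl((1-z\bar\zeta)q_\zeta(z)\bigr)-|1-z\bar\zeta|^2\,|q_\zeta(z)|^2$, valid for any $\lambda\in\torus$ since $(1-z\bar\zeta)q_\zeta(z)=1-\phi(z)\bar\lambda$. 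For (i)$\Rightarrow$(ii) I would pick $z_n\to\zeta$ with $\|\kappa_{z_n}\|^2\to c$; then $1-|\phi(z_n)|^2=\|\kappa_{z_n}\|^2(1-|z_n|^2)\to 0$, so, after passing to a subsequence, $\phi(z_n)\to\lambda\in\torus$ and the bounded sequence $\kappa_{z_n}$ converges weakly to some $g$ with $\|g\|^2\le c$; testing against $\kappa_w$ identifies $g(w)=\lim_n k^\phi(w,z_n)=(1-\phi(w)\bar\lambda)/(1-w\bar\zeta)=q_\zeta(w)$, so $q_\zeta\in\Hilbert(k^\phi)$ with $\|q_\zeta\|^2\le c$.

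For (ii)$\Rightarrow$(i) I would expand $0\le\|q_\zeta-\kappa_{r\zeta}\|^2$ and use $\langle q_\zeta,\kappa_{r\zeta}\rangle=q_\zeta(r\zeta)$ to get $2\operatorname{Re}q_\zeta(r\zeta)\le\|q_\zeta\|^2+\|\kappa_{r\zeta}\|^2$; inserting the identity at $z=r\zeta$ (where $1-z\bar\zeta=1-r$ is real) gives $\|\kappa_{r\zeta}\|^2\le 2\operatorname{Re}q_\zeta(r\zeta)/(1+r)$, hence $r\|\kappa_{r\zeta}\|^2\le\|q_\zeta\|^2$, so letting $r\uparrow 1$ yields $\liminf_{z\to\zeta}\|\kappa_z\|^2\le\|q_\zeta\|^2<\infty$, i.e. (i) with $c\le\|q_\zeta\|^2$. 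To get the exact value I would use that $\Hilbert(k^\phi)$ embeds contractively in the Hardy space $H^2$ (its kernel differs from the Szeg\H{o} kernel $(1-z\bar w)^{-1}$ by the positive kernel $\phi(z)\overline{\phi(w)}(1-z\bar w)^{-1}$) while $1/(1-z\bar\zeta)\notin H^2$: subtracting two functions $q_\zeta$ lying in $\Hilbert(k^\phi)$ for distinct $\lambda$ would force $\phi(z)/(1-z\bar\zeta)\in\Hilbert(k^\phi)$ and then $1/(1-z\bar\zeta)\in\Hilbert(k^\phi)$, a contradiction; so $\lambda$ is unique, and combining with (i)$\Rightarrow$(ii) gives $\|q_\zeta\|^2=c$.

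For (iii)$\Rightarrow$(i), since the evaluation $f\mapsto f(z)$ has norm $\|\kappa_z\|$ and, by (iii), $f(z_n)$ converges along every nontangential sequence $z_n\to\zeta$, Banach--Steinhaus makes $\|\kappa_z\|$ bounded on each nontangential approach region, whence $\liminf_{z\to\zeta}\|\kappa_z\|^2<\infty$. The substantive direction is (i)$\Rightarrow$(iii), which I expect to be the main obstacle: I would fix a nontangential region $S$ at $\zeta$, on which $1-|z|^2$ and $|1-z\bar\zeta|$ are comparable, say $|1-z\bar\zeta|\le C_S(1-|z|^2)$. The identity gives $\|\kappa_z\|^2\le 2\operatorname{Re}\bigl((1-z\bar\zeta)q_\zeta(z)\bigr)/(1-|z|^2)\le 2C_S|q_\zeta(z)|\le 2C_S\|q_\zeta\|\,\|\kappa_z\|$, hence the uniform bound $\|\kappa_z\|\le 2C_S\|q_\zeta\|$ on $S$; then $|1-\phi(z)\bar\lambda|=|1-z\bar\zeta|\,|q_\zeta(z)|\le|1-z\bar\zeta|\,\|q_\zeta\|\,\|\kappa_z\|\to 0$, so $\phi(z)\to\lambda$ nontangentially. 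Consequently $\langle\kappa_z,\kappa_w\rangle=k^\phi(w,z)\to q_\zeta(w)=\langle q_\zeta,\kappa_w\rangle$ for each $w$, which together with the uniform bound and the density of $\{\kappa_w\}$ upgrades to $\kappa_z\to q_\zeta$ weakly, so $q_\zeta(z)=\langle q_\zeta,\kappa_z\rangle\to\|q_\zeta\|^2=c$. Feeding $q_\zeta(z)\to c$ back into the identity, with $|1-z\bar\zeta|^2/(1-|z|^2)\to 0$ and $2\operatorname{Re}(1-z\bar\zeta)/(1-|z|^2)=1+|1-z\bar\zeta|^2/(1-|z|^2)\to 1$ on $S$, gives $\|\kappa_z\|^2\to c$; weak convergence together with convergence of the norms then yields $\|\kappa_z-q_\zeta\|\to 0$, so $f(z)=\langle f,\kappa_z\rangle\to\langle f,q_\zeta\rangle$ for every $f\in\Hilbert(k^\phi)$, which is (iii), the boundary value functional being $f\mapsto\langle f,q_\zeta\rangle$. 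The delicate point is precisely obtaining norm (not merely weak, not merely radial) convergence of the kernels along an arbitrary nontangential approach, which rests on the Stolz-angle bound and on the cancellation in the displayed identity.

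For the closing assertions, $\|q_\zeta\|^2=c$ was obtained above; and once (i) holds, Julia's lemma \eqref{eq:julias-lemma} produces $\mu\in\torus$ with $\phi(E(M,\zeta))\subseteq E(cM,\mu)$, so $\phi$ has horocyclic limit $\mu$ at $\zeta$, and since $\phi(r\zeta)\to\lambda$ as well (from $r\|\kappa_{r\zeta}\|^2\le\|q_\zeta\|^2$ and $(1-r)q_\zeta(r\zeta)=1-\phi(r\zeta)\bar\lambda$) we get $\mu=\lambda$, the same $\lambda$ recovered in both (ii) and (iii).
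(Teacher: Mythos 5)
Your equivalences are correct, and they follow essentially the same reproducing-kernel strategy as the paper's proof of \cref{theorem:main} specialized to the Szeg\H{o} kernel: for (i)$\Rightarrow$(ii) you extract a weakly convergent subsequence of kernel functions and identify the limit as $q_\zeta$ (the paper's computation \eqref{equation:qxi-computation}); for (ii)$\Rightarrow$(iii) you prove a uniform bound $\|\kappa_z\|\leq 2C_S\|q_\zeta\|$ on Stolz angles via Cauchy--Schwarz (the analogue of \eqref{equation:c-norm-estimate}) and upgrade pointwise to weak convergence; and (iii)$\Rightarrow$(i) is uniform boundedness, exactly as in the paper. Your additions are genuine improvements over the abstract statement: the uniqueness of $\lambda$ via the contractive embedding $\Hilbert(k^\phi)\subset H^2$ and $(1-z\bar\zeta)^{-1}\notin H^2$, which pins down $\|q_\zeta\|^2=c$ exactly (the general theorem only gives the two-sided estimate, and the paper recovers equality in \cref{section:dirichlet} by letting $M\to\tfrac12^+$ instead), and the norm convergence $\kappa_z\to q_\zeta$ in Stolz angles.

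The gap is in the final ``furthermore'': the horocyclic limit. You invoke the classical Julia lemma \eqref{eq:julias-lemma} (itself an external result you do not prove) and then infer the horocyclic limit from the inclusion $\phi(E(M,\zeta))\subseteq E(cM,\mu)$. That inference does not stand on its own: membership of $\phi(z_n)$ in a \emph{fixed} horodisk does not force $\phi(z_n)\to\mu$, because inside $E(M,\zeta)$ the quantity $|1-z\bar\zeta|^2/(1-|z|^2)$ need not tend to $0$ as $z\to\zeta$ (take $z_n\to\zeta$ along the boundary circle of the horodisk), so Julia's inequality only confines $\phi(z_n)$ to $E(cM,\mu)$ and leaves open, as far as that inequality is concerned, an interior cluster point; ruling this out is precisely the nontrivial content of the horocyclic statement. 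The correct mechanism, and the one the paper uses inside the proof of \cref{theorem:main} via \cref{lemma:growth-restriction}, is the growth estimate: since $q_\zeta\in\Hilbert(k^\phi)\subset H^2$, one has $(1-|z|^2)^{1/2}q_\zeta(z)\to0$ as $|z|\to1$, while on $E(M,\zeta)$ one has $|1-z\bar\zeta|\leq\sqrt{M(1-|z|^2)}$, whence $|1-\phi(z)\bar\lambda|=|1-z\bar\zeta|\,|q_\zeta(z)|\leq\sqrt{M}\,(1-|z|^2)^{1/2}|q_\zeta(z)|\to0$. This gives the horocyclic limit $\lambda$ directly from (ii), with no appeal to Julia's lemma and no need to reconcile $\mu$ with $\lambda$; you already have every ingredient for this one-line fix, but as written the step is unjustified.
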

\noindent This theorem does not directly include the notion of angular derivatives. However,
one can prove using (ii) and (iii) combined that $\phi$ has an angular derivative at
$\zeta$, thus recovering the classical theorem.

The above was generalized to the case of the Drury-Arveson kernel on the
unit ball by Jury \cite{jury_julia-caratheodory}. In this paper we obtain a
far reaching generalization of the Julia-Carathéodory theorem stated only in
terms of an arbitrary  nonempty set and reproducing kernels on it. A central
difficulty that we have to address is to find abstract analogues of:  the
boundary of $\disk$, the nontangential approach regions of $\torus$, and the
analytic selfmaps of $\disk$.

Let $k$ be a reproducing kernel on a nonempty set $X$.
The intuition behind a general notion of boundary is the identification of points
$x\in X$ with the kernel functions $k_x$ together with the following simple
procedure that embeds $X$ into an `extended' set. More precisely, let $\widehat X$
be the set of sequences $\hat x = (x_n)$ of $X$ such that the sequence of kernel functions $(k_{x_n})$
converge pointwise. Two sequences $\hat x,\hat y\in\widehat X$ are equivalent,
$\hat x\sim\hat y$ if and only if the corresponding sequences of functions $(k_{x_n})$ and
$(k_{y_n})$ converge pointwise to the same function; we let $\widetilde X$ denote the
quotient set. For $\xi\in\widetilde X$ we denote the above limiting function as $k_\xi$.
For these functions the following possibilities can occur:
\begin{enumerate}[label=(\roman*)]
\item $k_\xi = k_x$ for some $x\in X$,
\item $k_\xi\in\Hilbert(k)$ but (i) fails,
\item $k_\xi\notin\Hilbert(k)$.
\end{enumerate}
We define the \emph{reproductive boundary} of $X$ with respect to $k$ to be the
set of points $\xi\in\widetilde X$ that satisfy (iii). We denote this set by $\partial_kX$,
and call their corresponding functions \emph{boundary kernel functions} or
\emph{boundary functions} for short. For a sequence $(x_n)\in \widehat X$
we will write $x_n\xto{k}\xi$ to mean that $[(x_n)]_\sim=\xi\in\widetilde X$, similarly
we write $x\xto{k}\xi$ to mean \emph{along any sequence $(x_n)$ such that $x_n\xto{k}\xi$}.
For more properties of the reproductive boundary and a discussion of (i) and (ii) see \S2.3.
\\ Now we are able to generalize the nontangential and horocyclic approach regions, respectively.
For $\xi\in\partial_kX$ and $M>0$ consider the family subsets of $X$ \begin{align}
	\Gamma_k(M,\xi) &= \{x\in X: k(x,x)\leq M|k_\xi(x)|\}, \\
	E_k(M,\xi) &= \{x\in X: k(x,x)\leq M|k_\xi(x)|^2\}.
\end{align}
A set $\Gamma_k(M,\xi)$ \emph{approaches} $\xi$ if there is a sequence $x_n\xto{k}\xi$
within this set, if along any such sequence the function $f:X\to\plane$ has a limit, then
we say that $f$ has a $\Gamma_k$-limit at $\xi$. Of course similar definitions hold for
the $E_k$ family of sets, see \S2.4 for the complete definitions.
\\ Supposing that $k$ is nonzero, we shall generalize the notion of
analytic selfmaps of $\disk$ as the selfmaps $\phi:X\to X$ such that the function
\begin{equation}\label{eq:composition-factor}
	x,y\mapsto \frac{k(x,y)}{k(\phi(x),\phi(y))}, \qquad x,y\in X
\end{equation}
is a reproducing kernel, in this case $\phi$ is a called a \emph{composition factor} of $k$.
In general one can even allow for distinct kernels in the numerator and denominator,
see \S2.2.
\\ To illustrate these three object we give a list of many concrete examples in \S2.5.

In order to state a variant of our main theorem we need to introduce the following (mild)
regularity assumptions on $k$. Details are deferred to \S3.
\begin{enumerate}[label=(\Alph*)]
\item\label{boundary-to-diagonal}
For every $\xi\in\rand_kX$ there exists a constant $a_\xi$ such that
$|k_\xi(x)| \leq a_\xi k(x,x), \forall x\in X.$
\item\label{constants-inclusion}
There is a constant $b>0$ such that $|k(x,y)|>b$ for all $x,y\in X$.
\item\label{isolated-singularity}
Let $\xi\in\rand_kX$ and $(x_n)$ be a sequence in $X$. If $|k_\xi(x_n)|\to\infty$,
then $x_n\xto{k}\xi$.
\item\label{compact-boundary}
If for a sequence $(x_n)$ in $X$ $k(x_n,x_n)\to\infty$ as $n\to\infty$,
then there is a subsequence such that $x_{n_j}\xto{k}\xi\in\rand_kX$.
\end{enumerate}
With these assumptions we prove the following result, which actually is a special
case of our main theorem.
\begin{theorem1}
Let $k$ be a reproducing kernel on a nonempty set $X$ satisfying
\ref{boundary-to-diagonal}-\ref{compact-boundary}.
Let $\phi:X\to X$ be a composition factor of $k$.
Let $\xi\in\partial_kX$ and
suppose that $\Gamma_k(M,\xi)$ approaches $\xi$ for some $M>0$, then the
following are equivalent:
\begin{enumerate}[label=(\roman*)]
\item \[
	\liminf_{x\xto{k}\xi} \frac{k(x,x)}{k(\phi(x),\phi(x))} < \infty.
\]
\item There is a $\lambda\in\partial_kX$ such that the function defined by \[
	q_\xi(x) = \frac{k_\xi(x)}{k_\lambda(\phi(x))}
\]
belongs to $\Hilbert(\frac{k}{k\circ\phi})$.
\item All functions in $\Hilbert(\frac{k}{k\circ\phi})$ have a $\Gamma_k$-limit at $\xi$.
\end{enumerate}
Furthermore, if any of the above hold, then $\phi$ has $E_k$-limit $\lambda$ at $\xi$, and
we have the estimate $0<\|q_\xi\|^2\leq c\leq (Ma_\lambda\|q_\xi\|)^2$ for every $M$ such that
$\Gamma_k(M,\xi)$ approaches $\xi$.
\end{theorem1}
\noindent When $k$ is the Szeg\H{o} kernel on $\disk$ then we are exactly in the situation of Sarason's
theorem.
Many reproducing kernels satisfy \ref{boundary-to-diagonal}-\ref{compact-boundary},
some simple examples include the Drury-Arveson kernel, even in infinite dimensions,
and the Szeg\H{o} kernel on the polydisk. It seems that the most restrictive condition is
the assumption that $\phi$ is a composition factor of $k$. Our main theorem relaxes
this constraint considerably by allowing for distinct reproducing kernels in the numerator and
denominator of \eqref{eq:composition-factor}. This idea is interesting in its own right
as the following example shows.

For $0<\alpha\leq1$, let $\dirichlet_\alpha$ be the standard weighted Dirichlet space that
has reproducing kernel $(1-z\overline w)^{-\alpha},\,z,w\in\disk$. We improve the classical
theorem for the class of contractive multipliers of $\dirichlet_\alpha$. Notably
every contractive multiplier of the classical Dirichlet space is in this class for
every $\alpha>0$.
\addtocounter{corollary}{1}
\begin{corollary}
Fix $\zeta\in\torus$ and let $\phi$ be a contractive multiplier of $\dirichlet_\alpha$, then
the following are equivalent: \begin{enumerate}[label=(\roman*)]
\item \[
	\liminf_{z\to\zeta}\frac{1-|\phi(z)|^2}{(1-|z|^2)^\alpha} = c < \infty.
\]
\item There is a $\lambda\in\torus$ such that the difference quotient \[
	q_\zeta(z) = \frac{1-\phi(z)\overline\lambda}{(1-z\overline\zeta)^\alpha}
\]
has a nontangential limit $c$ at $\zeta$.
\item $\phi$ and the function $z\mapsto \phi'(z)(1-z\overline\zeta)^{1-\alpha}$ has nontangential
limits $\lambda\in\torus$ and $c\lambda\overline\zeta\alpha$ at $\zeta$, respectively.
\end{enumerate}
Furthermore, $c>0$ and $\phi$ has horocyclic limit $\lambda$ at $\zeta$.
\end{corollary}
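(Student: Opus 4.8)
The plan is to read the corollary off the general form of the main theorem --- the one permitting distinct reproducing kernels in the numerator and denominator, which is what is needed here, since the numerator $h(z,z)=(1-|z|^2)^{-\alpha}$ and denominator $s(\phi(z),\phi(z))=(1-|\phi(z)|^2)^{-1}$ of the corollary's (i) do not coincide. Here $h(z,w)=(1-z\bar w)^{-\alpha}$ is the reproducing kernel of $\dirichlet_\alpha$ and $s(z,w)=(1-z\bar w)^{-1}$ is the Szeg\H{o} kernel. The basic observation is that $\|M_\phi\|\le1$ on $\dirichlet_\alpha$ is equivalent to positive semidefiniteness of $\bigl(1-\phi(z)\bar{\phi(w)}\bigr)h(z,w)$, via the usual adjoint-of-multiplier identity $M_\phi^{*}h_w=\bar{\phi(w)}\,h_w$; since that function is exactly $h(z,w)/s(\phi(z),\phi(w))$, the contractive multipliers of $\dirichlet_\alpha$ are precisely the self-maps $\phi\colon\disk\to\disk$ that are composition factors with numerator $h$ and denominator $s$ in the sense of \S2.2. (A non-constant contractive multiplier has $|\phi|<1$ on $\disk$ by the maximum principle, hence is genuinely such a self-map; the degenerate constant unimodular case I would record and set aside.)

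Next I would lay out the geometry. One checks that $\rand_h\disk=\rand_s\disk=\torus$, with boundary functions $h_\zeta(z)=(1-z\bar\zeta)^{-\alpha}$ and $s_\lambda(z)=(1-z\bar\lambda)^{-1}$, and that $z_n\xto{h}\zeta$ means exactly $z_n\to\zeta$ in $\disk$; the two-kernel analogues of \ref{boundary-to-diagonal}-\ref{compact-boundary} hold for $(h,s)$, namely \ref{boundary-to-diagonal} from $1-|z|^2\le2|1-z\bar w|$ (which gives $a_\zeta=2^\alpha$ for $h$ and $a_\lambda=2$ for $s$), \ref{constants-inclusion} from $|1-z\bar w|\le2$, and \ref{isolated-singularity} and \ref{compact-boundary} from the elementary facts that $|1-z_n\bar\zeta|\to0$ forces $z_n\to\zeta$ and that $|z_n|\to1$ yields a subsequence converging in $\torus$. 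A direct computation then identifies $\Gamma_h(M,\zeta)=\{z:|1-z\bar\zeta|\le M^{1/\alpha}(1-|z|^2)\}$ with an ordinary Stolz region at $\zeta$ --- so $\Gamma_h$-limits are nontangential limits and $\Gamma_h(M,\zeta)$ approaches $\zeta$ for every $M\ge2^{-\alpha}$ --- and $E_h(M,\zeta)=\{z:|1-z\bar\zeta|^2\le M^{1/\alpha}(1-|z|^2)\}$ with a horocycle, so $E_h$-limits are horocyclic limits.

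Applying the main theorem to $(h,s,\phi)$ at $\zeta$ then translates its three equivalent conditions as follows. Its (i), $\liminf_{z\to\zeta}h(z,z)/s(\phi(z),\phi(z))<\infty$, is literally the corollary's (i). Its (ii) produces $\lambda\in\torus$ with $q_\zeta(z)=h_\zeta(z)/s_\lambda(\phi(z))=(1-\phi(z)\bar\lambda)/(1-z\bar\zeta)^\alpha$ belonging to $\Hilbert(\frac{h}{s\circ\phi})=\Hilbert(\frac{1-\phi(z)\bar{\phi(w)}}{(1-z\bar w)^\alpha})$. Its (iii) says that every function of that space, and in particular $q_\zeta$ itself, has a nontangential limit at $\zeta$; tracing the proof, the limit of $q_\zeta$ is $\la q_\zeta,q_\zeta\ra=\|q_\zeta\|^2$. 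Finally its ``furthermore'' clause already delivers $\|q_\zeta\|^2>0$ together with the horocyclic ($E_h$-) limit $\phi(z)\to\lambda$.

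What then remains is to pin the nontangential limit of $q_\zeta$ to exactly the constant $c$ of (i), and to pass between this de Branges--Rovnyak-type statement and the angular-derivative statement (iii). For the first I would use that the reproducing kernel $K_{r\zeta}$ of $\Hilbert(\frac{h}{s\circ\phi})$ at $r\zeta$ tends pointwise to $q_\zeta$ on $\disk$ as $r\uparrow1$ (because $\phi(r\zeta)\to\lambda$), while $\|K_{r\zeta}\|^2=(1-|\phi(r\zeta)|^2)/(1-r^2)^\alpha$, and then combine a weak-compactness argument in $\Hilbert(\frac{h}{s\circ\phi})$ with the abstract Julia lemma of this paper --- which forces the radial quotient $(1-|\phi(r\zeta)|^2)/(1-r^2)^\alpha$ down to its infimum $c$ --- to obtain $\|q_\zeta\|^2=c$; since $q_\zeta\not\equiv0$ this re-proves $c>0$. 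For the second, differentiating $q_\zeta(z)(1-z\bar\zeta)^\alpha=1-\phi(z)\bar\lambda$ and multiplying by $(1-z\bar\zeta)^{1-\alpha}$ gives $(1-z\bar\zeta)q_\zeta'(z)-\alpha\bar\zeta q_\zeta(z)=-\bar\lambda\,\phi'(z)(1-z\bar\zeta)^{1-\alpha}$; letting $z\to\zeta$ nontangentially and invoking the classical Lindel\"of fact that a holomorphic $g$ with a finite nontangential limit at $\zeta$ has $(1-z\bar\zeta)g'(z)\to0$ nontangentially, one reads off $\phi'(z)(1-z\bar\zeta)^{1-\alpha}\to c\alpha\lambda\bar\zeta$ from $q_\zeta(z)\to c$, while conversely a complex-variable L'H\^opital argument recovers $q_\zeta(z)\to c$ from (iii); in either direction the corollary's (i) follows from the radial bound $1-|\phi(r\zeta)|\le|1-\phi(r\zeta)\bar\lambda|=O((1-r)^\alpha)$. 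I expect the main obstacle to be this last, Carath\'eodory-type, step: the abstract theorem is silent about derivatives, so genuine one-variable complex analysis must be reinserted, and extracting the \emph{exact} constant $c\alpha\lambda\bar\zeta$ rather than merely one proportional to it is what needs care.
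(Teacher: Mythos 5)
Your overall route is the paper's: realize contractive multipliers of $\dirichlet_\alpha$ as composition factors with numerator $k(z,w)=(1-z\bar w)^{-\alpha}$ and denominator the Szeg\H{o} kernel, verify \ref{boundary-to-diagonal}--\ref{compact-boundary} for the Szeg\H{o} kernel and identify $\Gamma_k$, $E_k$ with Stolz regions and horocycles, then read (i) $\Leftrightarrow$ membership of $q_\zeta$ $\Leftrightarrow$ boundary values, together with $c>0$ and the horocyclic limit, off \cref{theorem:main} (the paper channels this through \cref{corollary:row-contractive-multiplier}), and finally pass between (ii) and (iii) by one-variable analysis. Your handling of that last step is the paper's \cref{lemma:weighted-derivative} in different clothes: the Lindel\"of-type fact $(1-z\bar\zeta)g'(z)\to0$ is proved by exactly the Cauchy-circle estimate used there, and your ``L'H\^opital'' direction is its integration along the segment from $z$ to $\zeta$; that part is sound.

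The genuine gap is the step that pins the nontangential limit of $q_\zeta$ to the exact value $c$. Weak convergence of the kernels $K_{r\zeta}$ to $q_\zeta$ only gives $\|q_\zeta\|^2\le\liminf_r\|K_{r\zeta}\|^2$, which is the inequality $\|q_\zeta\|^2\le c$ you already have from \cref{theorem:main}; for equality you need the reverse bound, and the Julia inequality (\cref{lemma:julia}, or its two-kernel version via Cauchy--Schwarz) does \emph{not} ``force the radial quotient down to its infimum'': along $z=r\zeta$ it yields only
\[
	\frac{1-|\phi(r\zeta)|^2}{(1-r^2)^\alpha}
	\;\le\; \frac{4}{(1+r)^{2\alpha}}\,\|q_\zeta\|^2
	\;\longrightarrow\; 4^{1-\alpha}\|q_\zeta\|^2 \qquad (r\to1^-),
\]
so for $\alpha<1$ an extra factor survives and equality is not reached. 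The paper instead uses the quantitative clause of \cref{theorem:main}, $\|q_\zeta\|^2\le c\le (Ma_\lambda\|q_\zeta\|)^2$ with $a_\lambda=2$, letting $M\to\tfrac12^+$ over the nontangential regions approaching $\zeta$. Be aware that this constant-chasing is delicate for $\alpha<1$ even in the paper's own parametrization (in the abstract normalization $k(x,x)\le M|k_\xi(x)|$ the admissible $M$ only decrease to $2^{-\alpha}$, leaving the same factor $2^{2(1-\alpha)}$; compare \cref{example:hartz}, where the radial quotient tends to $2^{1-\alpha}$ while $q_\zeta\equiv1$). So your instinct that the exact constant is the hard point is right, but the difficulty sits at the identification $\lim q_\zeta=c$, not at the derivative statement (iii): once $q_\zeta\to c$ nontangentially is known, the constant $c\lambda\bar\zeta\alpha$ in (iii) falls out of \cref{lemma:weighted-derivative} exactly as you describe.
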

\addtocounter{corollary}{-2}
\noindent In fact, the same corollary holds in the unit ball, see \cref{section:besov},
of $\plane^d$, which for $\alpha=1$ recovers the result of Jury mentioned above.
Note that for $\alpha<1$, the growth condition (i) is weaker than the classical
Julia-Carathéodory theorem. Therefore the angular derivative does not
exist at this point. However (iii) gives the exact compensating factor
$\phi'$ needs for it to have a nonzero boundary value.
This corollary is one motivation for the general form of our main theorem, which is given
in \S3.

Sarason also proved Julia's lemma as a consequence of his approach.
Its extension can be found in \S4. In addition this yields the following
result regarding iteration of composition factors.
\begin{theorem2}
Let $k$ be a reproducing kernel on a nonempty set $X$ that satisfies
\ref{boundary-to-diagonal}-\ref{compact-boundary}.
If a composition factor $\phi$ of $k$ and $\xi\in\partial_kX$ satisfies one of
\ref{item:liminf-condition}-\ref{item:uniform-boundary-values} in \cref{theorem:main}
with $c < 1$ and $\phi$ has $E_k$-limit $\xi$ at $\xi$, then the sequence of
iterates $\phi^n(x)\xto{k}\xi$ for every $x\in X$.
\\ In particular $\phi$ has no fixed point in $X$.
\end{theorem2}
\noindent The above can be seen as one direction of the Denjoy-Wolff theorem, disregarding the case $c=1$,
see \cref{question:iteration}. Most generalization of this theorem suppose that $\phi$
is fixed point free within $X$ and then prove that the iterates converge. The above results
asserts that $\phi$ is fixed point free.

The paper is organized as follows. \S2 serves a preliminary cause and
introduces the abstractions required. \S2.1 briefly establishes the fundamental theory of
reproducing kernels. In \S2.2 we introduce composition factors of reproducing
kernels. \S2.3 discusses and prove some properties of the reproductive boundary.
The approach regions to the reproductive boundary are realized in \S2.4.
In \S2.5 a list of examples is given to illustrate these three concepts.
\S3 is devoted to stating and proving our main theorem,
the general Julia-Carathéodory theorem for composition factors.
\S4 contains applications of this theorem. \S4.1 contains a general
version of Julia's lemma and proves the  uniqueness of some Denjoy-Wolff points
along with a condition for their existence. In \S4.2 we prove Corollary 2 stated
above, and in \S4.3 we briefly handle the case of the unit ball. The paper
concludes with \S5, a list of questions in the circle of problems introduced in
the present paper.

\section{Preliminaries}

\subsection{Reproducing kernels}
The contents of this section is essentially included in \cite{aronszajn} and \cite{paulsen}.
But we give a brief overview of the theory and state the theorems that we require.

\begin{definition}
Let $X$ be a nonempty set. A function $k:X\times X\to\plane$ is a \emph{reproducing kernel}
if for every finite subset $\{x_1,\ldots,x_n\}\subset X$ the matrix $[k(x_i,x_j)]_{i,j=1}^n$
is positive semidefinite. We write $k\geq0$ or $k(x,y)\geq0$ to mean this.
\\ If there is a a point $0_X\in X$ such that $k(x,0_X)\equiv1$, then we say that $k$ is
normalized at $0_X$.
\end{definition}
By Moore's theorem there exists a unique Hilbert space of functions $\Hilbert(k)$ such
that each \emph{kernel function} $k_y(x) \coloneq x\mapsto k(x,y),\,x,y\in X$ belong to
$\Hilbert(k)$ and they satisfy \[
	f(y) = \la f,k_y\ra,
\]
for every $f\in\Hilbert(k)$. It is an immediate consequence that $\|k_y\|^2=k(y,y)$.
Furthermore, a function $f:X\to\plane$ belongs to $\Hilbert(k)$ if and only if there
is a constant $c>0$ such that \begin{equation}\label{inclusion-criterion}
	c^2k(x,y) - f(x)\overline{f(y)} \geq 0,
\end{equation}
and the least such constant is $\|f\|$, the norm of $f$.

Of particular interest is the following two classes of symbols, in \S2.2 we shall
introduce a third class, which we will compare to these.
\begin{definition}
We call a function $\phi:X\to\plane$
a \emph{multiplier} of $\Hilbert(k)$ if the function $x\mapsto\phi(x)f(x)$ belongs to
$\Hilbert(k)$ whenever $f\in\Hilbert(k)$, we write the set of these as $\Mult(k)$.
\end{definition}
Using the closed graph theorem one can prove that the linear operator
$f\mapsto \phi f$ is bounded on $\Hilbert(k)$.  Furthermore $\phi\in\Mult(k)$ if and only
if there exists a $c>0$ such that \begin{equation}
	c^2k(x,y) - \phi(x)\overline{\phi(y)}k(x,y) \geq 0,
\end{equation}
and the least such constant is the norm of the induced operator. We will
write $\Mult_1(k)$ to mean the set of contractive multipliers on $\Hilbert(k)$.
Often we are interested in excluding the unimodular constants $x\to\lambda,\,|\lambda|=1$
from the set $\Mult_1(k)$, in this case we will write $\Mult_1(k)\setminus\torus$.

\begin{definition}
We call a selfmap $\phi:X\to X$
a \emph{composition symbol} of $\Hilbert(k)$ if the linear operator $C_\phi\coloneq f\mapsto f\circ\phi$
acts on $\Hilbert(k)$. The set of these are denoted by $\Comp(k)$.
\end{definition}
$\phi\in\Comp(k)$
if and only if there exists a constant $c>0$ such that \begin{equation}
	c^2k(x,y) - k(\phi(x),\phi(y)) \geq 0,
\end{equation}
and the least such constant is $\|C_\phi\|$. We will write $\Comp_1(k)$ to mean the set
of contractive composition symbols.
\begin{remark}
The notation $\Comp(k)$ is not standard, but we adopt it since we are concerned only
with the symbols themselves, not the operators that they induce.
\end{remark}

We end this section by stating a few results that are essential to our methods.
\begin{theorem*}[Schur's Product Theorem]
Let $k$ and $t$ be reproducing kernels on a nonempty set $X$, then their product
$x,y\mapsto k(x,y)t(x,y)$ is as well.
\end{theorem*}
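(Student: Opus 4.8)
The plan is to reduce to the finite-dimensional statement and then apply (and briefly reprove) the classical Schur product theorem for matrices. By the definition of a reproducing kernel it suffices to fix a finite subset $\{x_1,\dots,x_n\}\subset X$ and show that the matrix $[k(x_i,x_j)\,t(x_i,x_j)]_{i,j=1}^n$ is positive semidefinite. Setting $K=[k(x_i,x_j)]$ and $T=[t(x_i,x_j)]$, which are positive semidefinite by hypothesis, this matrix is precisely the Hadamard (entrywise) product $K\circ T$, so everything comes down to showing that the Hadamard product of two positive semidefinite matrices is positive semidefinite.

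For that I would use that a positive semidefinite matrix is a Gram matrix: writing the spectral decompositions $K_{ij}=\sum_p\lambda_p (u_p)_i\overline{(u_p)_j}$ and $T_{ij}=\sum_q\mu_q (w_q)_i\overline{(w_q)_j}$ with $\lambda_p,\mu_q\ge 0$, the entries of $K\circ T$ become $\sum_{p,q}\lambda_p\mu_q\,(u_p)_i(w_q)_i\,\overline{(u_p)_j(w_q)_j}$, i.e.\ a nonnegative combination of rank-one positive semidefinite matrices; testing against any vector $c\in\plane^n$ then gives a sum of terms $\lambda_p\mu_q\bigl|\sum_i c_i\,\overline{(u_p)_i(w_q)_i}\bigr|^2\ge 0$. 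Since the finite subset was arbitrary, $kt$ is a reproducing kernel.

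There is no real obstacle; this is a classical fact and the only care needed is the bookkeeping with conjugates and the positivity convention. If one prefers an argument in the spirit of the rest of \S2.1, one can instead observe that $k(x,y)t(x,y)=\la k_y\otimes t_y,\,k_x\otimes t_x\ra_{\Hilbert(k)\otimes\Hilbert(t)}$, so for each finite subset the matrix in question is the Gram matrix of the vectors $k_{x_i}\otimes t_{x_i}$ in the tensor product Hilbert space and hence positive semidefinite; as a byproduct this exhibits $\Hilbert(kt)$ as (the image of) a subspace of $\Hilbert(k)\otimes\Hilbert(t)$.
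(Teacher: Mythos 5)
Your proof is correct. The paper does not prove this statement at all: Schur's Product Theorem is recorded in \S2.1 as a classical fact taken from the cited references (Aronszajn, Paulsen), so there is no in-paper argument to compare against. Both routes you give are the standard ones and are carried out correctly --- the reduction to finite matrices followed by the spectral decomposition of $K$ and $T$ into nonnegative combinations of rank-one positive matrices, and equally the Gram-matrix observation $k(x,y)t(x,y)=\la k_y\otimes t_y,\,k_x\otimes t_x\ra_{\Hilbert(k)\otimes\Hilbert(t)}$, which is arguably the proof most in the spirit of the paper's kernel-theoretic framework; the only loose end is the placement of conjugates, which is a matter of convention and does not affect the nonnegativity of the resulting modulus-squared terms.
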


\begin{proposition}
Let $X$ and $Y$ be two nonempty sets and let $k$ be a reproducing kernel on $X$.
For any map $\phi:Y\to X$ the composed kernel $k\circ\phi = x,y\mapsto k(\phi(x),\phi(y))$
is a reproducing kernel on $Y$.
\end{proposition}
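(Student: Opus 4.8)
The plan is to verify the defining property of a reproducing kernel directly: I would fix an arbitrary finite subset $\{y_1,\dots,y_n\}\subset Y$ and show that the matrix $\bigl[\,k(\phi(y_i),\phi(y_j))\,\bigr]_{i,j=1}^{n}$ is positive semidefinite. The cleanest route uses the reproducing property recalled above. Since $k(x,y)=\la k_y,k_x\ra$ for all $x,y\in X$, substituting $\phi(y_i),\phi(y_j)$ shows that this matrix is exactly the Gram matrix of the finite family $k_{\phi(y_1)},\dots,k_{\phi(y_n)}\in\Hilbert(k)$, and every Gram matrix is positive semidefinite. That is precisely the condition required of $k\circ\phi$, so I would conclude that $k\circ\phi\geq0$ on $Y$ and hence that it is a reproducing kernel, with its Hilbert space $\Hilbert(k\circ\phi)$ supplied by Moore's theorem.

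Should one wish to avoid $\Hilbert(k)$, the same can be done at the level of scalars, and here lies the only point that merits a word: $\phi$ need not be injective, so $\phi(y_1),\dots,\phi(y_n)$ is in general a list with repetitions rather than literally a finite subset of $X$. I would handle this by letting $z_1,\dots,z_m$ be the distinct values occurring among the $\phi(y_i)$ and, for scalars $c_1,\dots,c_n$, grouping equal indices to rewrite $\sum_{i,j}c_i\overline{c_j}\,k(\phi(y_i),\phi(y_j))$ as $\sum_{l,l'}d_l\overline{d_{l'}}\,k(z_l,z_{l'})$ with $d_l=\sum_{i:\,\phi(y_i)=z_l}c_i$; this last sum is nonnegative because $\{z_1,\dots,z_m\}$ is an honest finite subset and $k\geq0$. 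In other words, positive semidefiniteness on finite subsets automatically upgrades to arbitrary finite lists, which is all that is needed.

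I do not expect any genuine obstacle here: the content is simply the recognition of $k\circ\phi$ as a Gram matrix (equivalently, a pullback of a positive semidefinite kernel), and the sole routine bookkeeping point is the subset-versus-list issue treated above. It is worth stressing, though, that this elementary statement is exactly what makes the ratio in \eqref{eq:composition-factor} and the space $\Hilbert(\frac{k}{k\circ\phi})$ meaningful, so it is the right thing to record before introducing composition factors.
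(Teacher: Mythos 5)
Your proof is correct: recognizing $\bigl[k(\phi(y_i),\phi(y_j))\bigr]_{i,j}$ as the Gram matrix of the kernel functions $k_{\phi(y_1)},\dots,k_{\phi(y_n)}$ in $\Hilbert(k)$ (with the repetition issue for non-injective $\phi$ handled as you do) is exactly the standard argument. The paper states this proposition without proof as a preliminary fact from the standard theory of reproducing kernels, so there is nothing to compare beyond noting that your route is the expected one.
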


The following fact, together with weak compactness will be used throughout the paper.
It is an immediate consequence of the density of the linear span of $\{k_x:x\in X\}$ in
$\Hilbert(k)$.
\begin{proposition}
A sequence $(f_n)$ in $\Hilbert(k)$ converges weakly to a function $f\in\Hilbert(k)$
if and only if $\|f_n\|$ is bounded and $f_n\to f$ pointwise.
\end{proposition}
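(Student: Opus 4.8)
The plan is to prove both implications by leaning on the density of $\mathrm{span}\{k_x : x \in X\}$ in $\Hilbert(k)$, which converts any assertion about inner products $\langle\,\cdot\,,g\rangle$ into an assertion about pointwise values, where it can be read off directly.

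For the forward implication, I would start from $f_n \to f$ weakly. Weakly convergent sequences are norm bounded (uniform boundedness principle), giving $\sup_n\|f_n\| < \infty$. Testing weak convergence against the vector $k_x$, for each fixed $x \in X$, gives $f_n(x) = \langle f_n, k_x\rangle \to \langle f, k_x\rangle = f(x)$, i.e. pointwise convergence.

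For the converse, assume $C := \sup_n\|f_n\| < \infty$, $f_n \to f$ pointwise and $f \in \Hilbert(k)$; I must show $\langle f_n, g\rangle \to \langle f, g\rangle$ for all $g \in \Hilbert(k)$. First handle $g = \sum_{i=1}^m c_i k_{x_i}$: by linearity and the reproducing property $\langle f_n, g\rangle = \sum_i \overline{c_i}\, f_n(x_i) \to \sum_i \overline{c_i}\, f(x_i) = \langle f, g\rangle$, straight from pointwise convergence. For general $g$, given $\varepsilon > 0$ choose such a finite combination $g'$ with $\|g - g'\| < \varepsilon$ and estimate
\[
	|\langle f_n - f, g\rangle| \le |\langle f_n - f,\, g - g'\rangle| + |\langle f_n - f, g'\rangle| \le (C + \|f\|)\,\varepsilon + |\langle f_n - f, g'\rangle|;
\]
the last term tends to $0$ by the previous case, so $\limsup_n |\langle f_n - f, g\rangle| \le (C + \|f\|)\varepsilon$, and letting $\varepsilon \downarrow 0$ finishes it.

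I expect no real obstacle; the only subtlety is that the uniform bound $\sup_n\|f_n\| < \infty$ is genuinely needed in the converse (the $(C+\|f\|)\varepsilon$ term would otherwise be uncontrolled), which is why it appears as a hypothesis there while being automatic in the forward direction.
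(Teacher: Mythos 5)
Your proof is correct and follows exactly the route the paper indicates, namely that the statement is an immediate consequence of the density of $\mathrm{span}\{k_x : x\in X\}$ in $\Hilbert(k)$ (together with uniform boundedness for the forward direction). Nothing to add.
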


\subsection{Composition factors}
Throughout this subsection fix $k$ and $t$ to be reproducing kernels on nonempty
sets $X$ and $Y$, respectively. In addition, we suppose that $t$ is \emph{nonzero}
i.e. $t(x,y)\neq0$ for all $x,y\in Y$.
\begin{definition}
Let $\phi:X\to Y$ be a map. If \begin{equation}
	\frac{k}{t\circ\phi} \geq 0,
\end{equation}
then we call $\phi$ a \emph{composition factor} of $k$ with $t$. We denote the
set of these as $\Fact(k, t)$.
\\ In the special case that $k=t$ then we simply say that $\phi$ is a composition
factor of $k$ and write the set as $\Fact(k)$.
\end{definition}

\begin{proposition}\label{proposition:weighted-composition}
Let $\phi\in\Fact(k,t)$, then for every $f\in\Hilbert(\frac{k}{t\circ\phi})$ the
pair $(f,\phi)$ induces a bounded weighted composition operator from $\Hilbert(t)$
into $\Hilbert(k)$ of norm at most $\|f\|$. That is, the linear map defined by
$(W_{f,\phi})g(x) = f(x)g(\phi(x))$ is bounded from $\Hilbert(t)$ into $\Hilbert(k)$.
\\ In particular, if the constant function $x\mapsto1$ belongs to $\Hilbert(\frac{k}{t\circ\phi})$,
then $\phi\in\Comp(t, k)$ and
if $x\mapsto1\in\Hilbert(t\circ\phi)$, then $\Hilbert(\frac{k}{t\circ\phi})\subset\Hilbert(k)$.
\end{proposition}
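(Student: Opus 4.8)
The plan is to reduce the statement to Schur's product theorem together with the standard adjoint characterisation of bounded operators between reproducing kernel Hilbert spaces. Write $m \coloneq \frac{k}{t\circ\phi}$; since $\phi\in\Fact(k,t)$ this is a reproducing kernel on $X$, and because $t$ is nonzero we may clear the denominator to obtain $k(x,y) = m(x,y)\,t(\phi(x),\phi(y))$ for all $x,y\in X$. By the proposition on composed kernels above, $t\circ\phi$ is itself a reproducing kernel on $X$.

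First I would record the key pointwise inequality. Fix $f\in\Hilbert(m)$. The inclusion criterion \eqref{inclusion-criterion} for $\Hilbert(m)$ gives $\|f\|^2 m(x,y) - f(x)\overline{f(y)}\geq 0$. Multiplying this reproducing kernel by $t\circ\phi$ and invoking Schur's product theorem yields $\|f\|^2\, m(x,y)\,t(\phi(x),\phi(y)) - f(x)\overline{f(y)}\,t(\phi(x),\phi(y)) \geq 0$, that is, after the factorisation of $k$,
\[
	\|f\|^2\, k(x,y) - f(x)\,\overline{f(y)}\,t(\phi(x),\phi(y)) \geq 0 .
\]

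Next I would manufacture the operator. Define $T$ on the linear span of $\{k_x : x\in X\}$, which is dense in $\Hilbert(k)$, by $T k_x = \overline{f(x)}\, t_{\phi(x)}$ together with linear extension. Expanding $\|T\bigl(\sum_i c_i k_{x_i}\bigr)\|^2$ and $\|\sum_i c_i k_{x_i}\|^2$ into their Gram sums, the displayed inequality is precisely the statement that the former is at most $\|f\|^2$ times the latter; hence $T$ is well defined and extends to a bounded operator $\Hilbert(k)\to\Hilbert(t)$ with $\|T\|\leq\|f\|$. Put $W_{f,\phi}\coloneq T^*$, so that $\|W_{f,\phi}\|\leq\|f\|$, and compute, for $g\in\Hilbert(t)$ and $x\in X$,
\[
	(W_{f,\phi}g)(x) = \la W_{f,\phi}g, k_x\ra = \la g, T k_x\ra = \la g, \overline{f(x)}\, t_{\phi(x)}\ra = f(x)\, g(\phi(x)) .
\]
This identifies $W_{f,\phi}$ as the asserted weighted composition operator. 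The only genuinely delicate point in the whole proof is the bookkeeping here: one must follow the conjugations and the transposition of indices so that the pointwise kernel inequality lines up exactly with the Gram-matrix estimate $\|T\|\leq\|f\|$, and one must check that $T^*$ is literally $W_{f,\phi}$ rather than a twisted variant; everything else is routine manipulation of positive semidefinite kernels.

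Finally the two special cases. If $x\mapsto1$ lies in $\Hilbert(m)$, take $f=1$: then $W_{1,\phi}=C_\phi$ is bounded from $\Hilbert(t)$ into $\Hilbert(k)$, i.e. $\phi\in\Comp(t,k)$. If instead $x\mapsto1$ lies in $\Hilbert(t\circ\phi)$, apply the part just proved with $t$ replaced by the (nonzero) kernel $t\circ\phi$ on $X$ and $\phi$ replaced by the identity map $\Id_X$; since $(t\circ\phi)\circ\Id_X = t\circ\phi$, we have $\Id_X\in\Fact(k, t\circ\phi)$ and $\Hilbert\bigl(\frac{k}{(t\circ\phi)\circ\Id_X}\bigr)=\Hilbert(m)$, so for every $f\in\Hilbert(m)$ the operator $W_{f,\Id_X}\colon\Hilbert(t\circ\phi)\to\Hilbert(k)$ is bounded and $f = W_{f,\Id_X}(x\mapsto1)\in\Hilbert(k)$. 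Hence $\Hilbert\bigl(\frac{k}{t\circ\phi}\bigr)\subseteq\Hilbert(k)$.
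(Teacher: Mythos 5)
Your proposal is correct and follows essentially the same route as the paper: multiply the inclusion-criterion inequality $\|f\|^2\frac{k}{t\circ\phi}-f\overline{f}\geq 0$ by $t\circ\phi$ via Schur's product theorem and read off boundedness of $W_{f,\phi}$ from the resulting kernel inequality. The only difference is that you spell out the standard adjoint/Gram-matrix argument (and the two ``in particular'' clauses) that the paper treats as the known equivalence, and these details are handled correctly.
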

\begin{proof}
Let $f\in\Hilbert(\frac{k}{t\circ\phi})$, this is equivalent to \[
	\|f\|^2\frac{k(x,y)}{t\circ\phi(x,y)} - f(x)\overline{f(y)} \geq 0.
\]
Multiplying the above kernel with $t\circ\phi$ yields, by Schur's Product Theorem, \[
	\|f\|^2k(x,y) - f(x)\overline{f(y)}t\circ\phi(x,y) \geq 0,
\]
which is equivalent to our desired conclusion.
\end{proof}
\begin{remark}
Suppose that $k$ and $t$ are normalized at some points $0_X\in X$ and $0_Y\in Y$,
respectively. If $\phi\in\Fact(k,t)$ with $\phi(0_X) = 0_Y$, then the kernel
$\frac{k}{t\circ\phi}$ is normalized at $0_X$. In this case,
\cref{proposition:weighted-composition} shows that $\phi\in\Comp(k,t)$ with norm at most 1.
\end{remark}

\begin{proposition}\label{proposition:transitive-composition}
Let $r$ be a nonzero reproducing kernel on a nonempty set $Z$.
If $\phi\in\Fact(k,t)$ and $\psi\in\Fact(t,r)$, then $\psi\circ\phi\in\Fact(k,r)$.
\end{proposition}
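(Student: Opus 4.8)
The plan is to chain the two factorization hypotheses together using \cref{proposition:weighted-composition}'s companion result on composed kernels and Schur's Product Theorem. First I would record the trivial but essential identity that composition of kernels is associative in the obvious sense: for the selfmap $\psi\circ\phi:X\to Z$ one has $r\circ(\psi\circ\phi) = (r\circ\psi)\circ\phi$ as functions on $X\times X$, since both sides evaluate at $(x,y)$ to $r(\psi(\phi(x)),\psi(\phi(y)))$.

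Next, from $\psi\in\Fact(t,r)$ we have $\frac{t}{r\circ\psi}\geq0$ on $Y$. Applying the earlier proposition on composed kernels to the map $\phi:X\to Y$, the function $\bigl(\frac{t}{r\circ\psi}\bigr)\circ\phi$ is a reproducing kernel on $X$; since $r$ is nonzero, $r\circ\psi$ never vanishes on $Y$, hence $(r\circ\psi)\circ\phi$ never vanishes on $X$, so this kernel is literally $\frac{t\circ\phi}{(r\circ\psi)\circ\phi}$.

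Then I would multiply this kernel by $\frac{k}{t\circ\phi}\geq0$, which comes from $\phi\in\Fact(k,t)$ and is a legitimate quotient since $t$, and therefore $t\circ\phi$, is nonzero. By Schur's Product Theorem the pointwise product is again a reproducing kernel on $X$, and the two occurrences of $t\circ\phi$ cancel, leaving $\frac{k}{(r\circ\psi)\circ\phi} = \frac{k}{r\circ(\psi\circ\phi)}\geq0$. This is precisely the assertion $\psi\circ\phi\in\Fact(k,r)$.

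I do not anticipate a real obstacle; the proof is essentially a one-line computation. The only points that need to be stated carefully are the nonvanishing bookkeeping—so that every quotient appearing is an honest reproducing kernel—and the associativity identity for composed kernels, and both are immediate from the standing assumption that $t$ and $r$ are nonzero.
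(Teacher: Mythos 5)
Your argument is correct and is essentially the paper's own proof: compose $\frac{t}{r\circ\psi}$ with $\phi$, multiply by $\frac{k}{t\circ\phi}$, and conclude by Schur's Product Theorem. The only difference is that you spell out the associativity and nonvanishing bookkeeping that the paper leaves implicit, which is fine but not a new route.
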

\begin{proof}
Compose the kernel $\frac{t}{r\circ\psi}$ with $\phi$ and multiply with $\frac{k}{t\circ\phi}$,
the result then follows by Schur's Product Theorem.
\end{proof}
\begin{remark}
In the case that $t=k$ \cref{proposition:transitive-composition} shows that $\Fact(k)$ forms
a semigroup under composition.
\end{remark}

\begin{proposition}\label{proposition:automorphism-composition}
Let $k$ be normalized at a point $0_X$. If for $\phi\in\Fact(k)$ there exists an automorphism
$\psi$ of $X$ with $\psi(\phi(0_X)) = 0_X$ such that $\psi\in\Fact(k)\cap\Comp(k)$ and
$\psi^{-1}\in\Comp(k)$, then $\phi\in\Comp(k)$.
\end{proposition}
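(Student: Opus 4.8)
The plan is to factor $\phi = \psi^{-1}\circ(\psi\circ\phi)$ and deduce boundedness of $C_\phi$ on $\Hilbert(k)$ from boundedness of $C_{\psi\circ\phi}$ and invertibility of $C_\psi$. The decisive point is that, although $\phi$ need not fix the normalization point, the map $\psi\circ\phi$ does: $(\psi\circ\phi)(0_X) = \psi(\phi(0_X)) = 0_X$. This is exactly the hypothesis under which the machinery of \cref{proposition:weighted-composition} and the remark following it becomes available.

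First I would check that $\psi\circ\phi\in\Comp(k)$. Since $\phi\in\Fact(k) = \Fact(k,k)$ and $\psi\in\Fact(k) = \Fact(k,k)$, \cref{proposition:transitive-composition} gives $\psi\circ\phi\in\Fact(k,k) = \Fact(k)$. Because $k$ is normalized at $0_X$ and $(\psi\circ\phi)(0_X) = 0_X$, a direct computation shows the quotient kernel $\frac{k}{k\circ(\psi\circ\phi)}$ is normalized at $0_X$: for every $x\in X$,
\[
	\frac{k(x,0_X)}{k\big((\psi\circ\phi)(x),(\psi\circ\phi)(0_X)\big)} = \frac{1}{1} = 1 .
\]
Hence the constant function $x\mapsto1$ is the kernel function of $\Hilbert(\frac{k}{k\circ(\psi\circ\phi)})$ at $0_X$, so in particular $x\mapsto1$ belongs to $\Hilbert(\frac{k}{k\circ(\psi\circ\phi)})$, with norm $1$. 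Applying \cref{proposition:weighted-composition} with the kernel $t = k$ and the map $\psi\circ\phi$, we conclude $\psi\circ\phi\in\Comp(k)$; that is, $C_{\psi\circ\phi}$ is a bounded (in fact contractive) operator on $\Hilbert(k)$.

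Next I would use that $\psi$ is an automorphism of $X$ with $\psi,\psi^{-1}\in\Comp(k)$: both $C_\psi$ and $C_{\psi^{-1}}$ are bounded on $\Hilbert(k)$, and since $\psi^{-1}\circ\psi = \psi\circ\psi^{-1} = \Id_X$ we get $C_{\psi^{-1}}C_\psi = C_\psi C_{\psi^{-1}} = \Id_{\Hilbert(k)}$, so $C_\psi$ is invertible with $C_\psi^{-1} = C_{\psi^{-1}}$. Finally, because $C$ is contravariant, $(C_{\psi\circ\phi}C_{\psi^{-1}})f = (f\circ\psi^{-1})\circ(\psi\circ\phi) = f\circ\phi$ for every $f\in\Hilbert(k)$, i.e. $C_\phi = C_{\psi\circ\phi}\,C_{\psi^{-1}}$ on $\Hilbert(k)$. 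As a composition of two bounded operators on $\Hilbert(k)$ this is bounded, and in particular it maps $\Hilbert(k)$ into itself, so $\phi\in\Comp(k)$.

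I do not expect a genuine obstacle here: the argument is a short chain of the already-established facts about $\Fact$ and $\Comp$. The only thing requiring care is the bookkeeping — verifying that $\psi\circ\phi$ really fixes $0_X$ so that the ``constant function lies in the quotient space'' mechanism applies, and getting the direction of the composition identity right, since $C_\phi C_\psi = C_{\psi\circ\phi}$ (not $C_{\phi\circ\psi}$), which is what makes $C_\phi = C_{\psi\circ\phi}C_\psi^{-1} = C_{\psi\circ\phi}C_{\psi^{-1}}$.
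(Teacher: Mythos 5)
Your proof is correct and follows essentially the same route as the paper: \cref{proposition:transitive-composition} gives $\psi\circ\phi\in\Fact(k)$, the fixed point $(\psi\circ\phi)(0_X)=0_X$ makes the quotient kernel normalized so that \cref{proposition:weighted-composition} yields $\psi\circ\phi\in\Comp(k)$, and then $\phi=\psi^{-1}\circ(\psi\circ\phi)$ together with $\psi^{-1}\in\Comp(k)$ finishes the argument. Your extra bookkeeping (the explicit identity $C_\phi=C_{\psi\circ\phi}C_{\psi^{-1}}$) is just a more detailed version of the paper's closing sentence, and the invertibility of $C_\psi$ you mention is not actually needed.
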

\begin{proof}
By \cref{proposition:transitive-composition}, $\psi\circ\phi\in\Fact(k)$ and
the quotient kernel $\frac{k}{k\circ\psi\circ\phi}$ is normalized at $0_X$. Therefore
$\psi\circ\phi\in\Comp(k)$ by \cref{proposition:weighted-composition} and the result
follows by composing $\psi^{-1}$ with $\psi\circ\phi$.
\end{proof}

\subsection{Reproductive boundary}
Throughout this section, let $k$ be a reproducing kernel on a nonempty set $X$.
Recall the definition of the reproductive boundary from the introduction.
\begin{definition}
Let $\widehat X$ be the set of sequences $\hat x=(x_n)$ of $X$ such that the sequence
of kernel functions $k_{x_n}$ converge pointwise. Two sequences $\hat x,\hat y\in\widehat X$
are equivalent, $\hat x\sim\hat y$ if and only if the corresponding sequences of functions
$(k_{x_n})$ and $(k_{y_n})$ converge pointwise to the same function; we let $\widetilde X$
denote the quotient set. For $\xi\in\widetilde X$ we denote the above limiting function
as $k_\xi$. For these functions the following possibilities can occur:
\begin{enumerate}[label=(\roman*)]
\item $k_\xi = k_x$ for some $x\in X$,
\item $k_\xi\in\Hilbert(k)$ but (i) is fails,
\item $k_\xi\notin\Hilbert(k)$.
\end{enumerate}
We define the \emph{reproductive boundary} to be the set of points $\xi\in\widetilde X$
such that (iii), written as $\partial_kX$, and call their corresponding functions
\emph{boundary kernel functions} or \emph{boundary functions} for short.
\\ Similarly we define the \emph{reproductive interior} to be the set of point $\xi\in\widetilde X$
such that \emph{either} (i) or (ii), written as $\Int_kX$, and call their corresponding
functions \emph{interior kernel functions} or \emph{interior functions} for short.
\\ In any case, if the sequence of functions $k_{x_n}$ tend pointwise to $k_\xi$,
then we say that the sequence is $k$-convergent and write $x_n\xto{k}\xi$, in other words
the equivalence class $[(x_n)]_\sim = \xi$. Similarly
we write $x\xto{k}\xi$ to mean \emph{along any sequence $(x_n)$ such that $x\xto{k}\xi$}.
\end{definition}
The reproductive boundary is our sole concern in this paper, we leave the discussion
of the reproductive interior as the following remark.
\begin{remark}
If (i), then we may freely identify $\xi$ with a point $x\in X$, loosely speaking
$\xi$ is an interior point. However, if $k$ does not separate points, then the point $x$ might not be unique.
The possibility (ii) is never reached for the Szeg\H{o} kernel, in a sense this kernel is
complete. Consider restricting the Szeg\H{o} kernel $k^*(z,w)=(1-z\bar w)^{-1}$
to the punctured plane $\disk^* = \disk\setminus\{0\}$, in this case the constant function
$z\mapsto1$ belongs to $\Hilbert(k^*)$, but is not equal to $k^*_w$ for any $w\in\disk^*$.
In general we have the above trichotomy, but we can reduce it to a dichotomy by letting
$\bar k(\xi,\upsilon) = \la k_\upsilon, k_\xi\ra_{\Hilbert(k)},\,\xi,\upsilon\in\Int_kX$
and by embedding $\Hilbert(k)$ into the space $\Hilbert(\bar k)$, i.e. we can extend every function $f\in\Hilbert(k)$ to
$\Int_kX$ via the rule $f(\xi) = \la f, k_\xi\ra_{\Hilbert(k)}$.
\end{remark}
\begin{remark}\label{remark:topology}
In general, $\widetilde X$ is not a topological space. One could let $\{k_x:x\in X\}$
inherit the topology of pointwise convergence. But for the purposes of
this paper it is more convenient to work with sequences rather than nets.
Nonetheless, in the worked examples there will often be a clear correspondence
to a preexisting topology. \\ Another approach that might be of independent
interest is as follows. Consider the closure of $\{k_x:x\in X\}$ in the topology
of uniform convergence on compact sets given by the metric, see \cite{agler-mccarthy_cnp}
\begin{equation}\label{equation:p-metric}
	p_k(x,y)
	= \sqrt{1-\frac{|k(x,y)|^2}{k(x,x)^\frac12 k(y,y)^\frac12}}, \qquad x,y\in X.
\end{equation}
This construction is similar to the Busemann boundary, see \cite{bracci_boundary}.
\\ When is this topology metrizable?
Let $(x_n)$ be a sequence in $X$ and consider the properties: \begin{align}
	&\text{if $k_{x_n}\xto{w}g\in\Hilbert(k)$, then $g=k_x$ for some $x\in X$,} \\
	&\text{if $\|k_{x_n}\|\to\infty$, then $\hat k_{x_n} = \frac{k_{x_n}}{\|k_{x_n}\|}\xto{w}0$,} \\
	&\text{if $k_{x_n}\xto{w}k_x$, then $k_{x_n}\to k_x$ in norm.}
\end{align}
If $k$ satisfies the first two, then $X$ is complete under the $p_k$ metric.
If it also satisfies the third, then the closed balls centered around some
point $x\in X$ given by \[
	\overline{B(\epsilon,x)} = \{y\in X:p_k(x,y)\leq\epsilon\}, \qquad 0<\epsilon<1
\]
are compact. Clearly $\cup_{0<\epsilon<1}\overline{B(x,\epsilon)} = X$ so $X$ is
hemicompact with respect to $p_k$. Hence the closure of $\{k_x:x\}$ in the
compact-open topology is metrizable.
\end{remark}

The Szegő kernel $k(z,w)=(1-z\overline w)^{-1}$ on the unit disk satisfies
$k(z,z)\to\infty$ as $|z|\to1^-$. Indeed, any kernel evaluated on the diagonal
diverges to $\infty$ as the variable tends to the reproductive boundary.
\begin{lemma}\label{lemma:boundary-explosion}
Let $x_n\xto{k}\xi\in\rand_kX$, then $\|k_{x_n}\|^2 = k(x_n,x_n)\to\infty$ as
$n\to\infty$.
\end{lemma}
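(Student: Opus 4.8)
The plan is to argue by contradiction using weak compactness of bounded sets in a Hilbert space together with the pointwise characterization of weak convergence recalled just above (the proposition stating that $(f_n)\to f$ weakly in $\Hilbert(k)$ if and only if $\|f_n\|$ is bounded and $f_n\to f$ pointwise). Recall that $x_n\xto{k}\xi$ means precisely that $k_{x_n}\to k_\xi$ pointwise on $X$, while $\xi\in\rand_kX$ means $k_\xi\notin\Hilbert(k)$.

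First I would suppose, for contradiction, that $k(x_n,x_n)=\|k_{x_n}\|^2$ does not tend to $\infty$. Then there is a subsequence $(x_{n_j})$ along which $\|k_{x_{n_j}}\|$ stays bounded. Since bounded sets in a Hilbert space are weakly sequentially compact, I would pass to a further subsequence $(x_{n_{j_l}})$ such that $k_{x_{n_{j_l}}}$ converges weakly to some $g\in\Hilbert(k)$.

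Next, by the forward direction of the weak-convergence proposition, weak convergence forces $k_{x_{n_{j_l}}}\to g$ pointwise on $X$. On the other hand, $(k_{x_{n_{j_l}}})$ is a subsequence of $(k_{x_n})$, which converges pointwise to $k_\xi$ by hypothesis; hence $k_{x_{n_{j_l}}}\to k_\xi$ pointwise as well. Uniqueness of pointwise limits gives $g=k_\xi$, so $k_\xi=g\in\Hilbert(k)$, contradicting $\xi\in\rand_kX$. Therefore $k(x_n,x_n)\to\infty$.

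There is really no serious obstacle here: the only thing to be a little careful about is that the subsequence extractions are nested correctly and that one invokes the pointwise limit of $(k_{x_n})$ (which is $k_\xi$ by definition of $x_n\xto{k}\xi$) rather than just the convergence of the chosen subsequence. Once that bookkeeping is in place, the argument is immediate.
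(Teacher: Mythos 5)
Your argument is correct and is essentially the paper's own proof: assume boundedness (of a subsequence), extract a weakly convergent subsequence, use that weak convergence against kernel functions gives pointwise convergence to conclude the weak limit equals $k_\xi$, contradicting $k_\xi\notin\Hilbert(k)$. If anything, your handling of the negation (passing first to a bounded subsequence) is slightly more careful than the paper's phrasing, but the substance is identical.
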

\begin{proof}
Suppose on the contrary that $k(x_n,x_n)$ is bounded. By weak compactness,
passing to a subsequence if necessary, we can assume that $k_{x_n}$ converges
weakly to a function $g\in\Hilbert(k)$. But \[
	g(y)
	= \la g,k_y\ra
	= \lim_{n\to\infty}\la k_{x_n},k_y\ra
	= \lim_{n\to\infty}k_{x_n}(y)
	= k_\xi(y),
\]
contradicting the fact that $k_\xi\notin\Hilbert(k)$.
\end{proof}
\begin{remark}
It appears that \cref{lemma:boundary-explosion} suggests an equivalent way to
define the boundary. Suppose conversely that for a sequence $(x_n)$ the
functions $k_{x_n}$ converge pointwise to a function $k_\xi$ and
$k(x_n,x_n)\to\infty$, does this imply that $\xi\in\rand_kX$? The following
example shows that this fails in general.  Observe the infinite matrix \[
	\begin{pmatrix}
	1 & 1 & 1 & \ldots \\
	1 & 2 & 1 & \ldots \\
	1 & 1 & 3 & \ldots \\
	\vdots & \vdots & \vdots & \ddots
	\end{pmatrix},
\]
because it has strictly positive subdeterminants,
it can be regarded as a reproducing kernel on the set of natural numbers $\natural$.
For any sequence $(j_n)$ in $\natural$ such that $j_n\to\infty$ as $n\to\infty$, then
$k(j_n,j_n)=j_n\to\infty$. But the functions $i\mapsto k(i,j_n)$ tend pointwise
to $1$. In other words they tend pointwise to the function $i\mapsto k(i,0)$, represented
by the top row, which belongs to $\Hilbert(k)$.
\end{remark}

It is a well known fact that functions $f\in H^2$, the Hardy space on the unit disk,
satisfy the growth restriction $f(z) = o((1-|z|^2)^\frac12)$ as $|z|\to1^-$.
We have the following generalization to functions in $\Hilbert(k)$ when the
variable tends to the reproductive boundary of $k$.
\begin{lemma}\label{lemma:growth-restriction}
If $x_n\xto{k}\xi\in\rand_kX$, then the normalized kernel functions
$\hat k_x = \frac{k_x}{k(x,x)^\frac12}$ tend weakly to $0$ along this sequence,
that is, for every $f\in\Hilbert(k)$ \begin{equation}
	\lim_{n\to\infty}\la f,\hat k_{x_n}\ra
	= \lim_{n\to\infty}\frac{f(x_n)}{k(x_n,x_n)^\frac12}
	= 0.
\end{equation}
\end{lemma}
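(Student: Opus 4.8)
The plan is to deduce this directly from \cref{lemma:boundary-explosion} together with the weak-convergence criterion recorded in Proposition 3 (a norm-bounded sequence converges weakly precisely when it converges pointwise, to the same limit).

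First I would note that each normalized kernel function is a unit vector: $\|\hat k_{x_n}\|^2 = k(x_n,x_n)^{-1}\|k_{x_n}\|^2 = 1$ by the reproducing property, so the sequence $(\hat k_{x_n})$ is bounded in $\Hilbert(k)$. Next I would check that it converges pointwise to the zero function. Fix $y\in X$; then $\hat k_{x_n}(y) = k(y,x_n)/k(x_n,x_n)^{1/2} = k_{x_n}(y)/k(x_n,x_n)^{1/2}$. Since $x_n\xto{k}\xi$, the numerator $k_{x_n}(y)$ converges (to $k_\xi(y)$) and is in particular bounded in $n$, while by \cref{lemma:boundary-explosion} the denominator $k(x_n,x_n)^{1/2}\to\infty$. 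Hence $\hat k_{x_n}(y)\to0$ for every $y$.

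Applying Proposition 3 with limit function $0\in\Hilbert(k)$ then gives $\hat k_{x_n}\xto{w}0$, i.e. $\la f,\hat k_{x_n}\ra\to0$ for every $f\in\Hilbert(k)$. Unwinding the definition, and using that $k(x_n,x_n)$ is a nonnegative real so that $k(x_n,x_n)^{1/2}$ may be pulled out of the inner product, $\la f,\hat k_{x_n}\ra = k(x_n,x_n)^{-1/2}\la f,k_{x_n}\ra = f(x_n)/k(x_n,x_n)^{1/2}$, which is exactly the displayed identity.

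I do not anticipate a genuine obstacle here; the only point worth flagging is that the whole argument rests on \cref{lemma:boundary-explosion}, which is precisely the place where the hypothesis $\xi\in\rand_kX$ (rather than merely $\xi\in\widetilde X$) is used — for an interior point the denominator need not blow up and the conclusion can fail.
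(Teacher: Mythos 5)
Your proposal is correct and coincides with the paper's own argument: norm boundedness via $\|\hat k_{x_n}\|=1$, pointwise convergence to $0$ from the boundedness of $k_{x_n}(y)$ together with \cref{lemma:boundary-explosion}, and then the weak-convergence criterion for norm-bounded pointwise convergent sequences. No issues.
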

\begin{proof}
Since $\|\hat k_{x_n}\|=1$ it is sufficient to prove that $\hat k_{x_n}$ converge
pointwise to $0$. The sequence $k_{x_n}(y)$ is convergent by assumption, hence
it is bounded, therefore \[
	\hat k_{x_n}(y) = \frac{k_{x_n}(y)}{k(x_n,x_n)^\frac12}
\]
tend to $0$ as $n\to\infty$ by \cref{lemma:boundary-explosion}.
\end{proof}

\subsection{Approach regions}
\begin{definition}
Let $k$ be a reproducing kernel on a nonempty set $X$.
For $\xi\in\rand_k X$ and $M>0$ consider the sets \begin{align}
	\Gamma_k(M,\xi) &= \{x\in X: k(x,x)\leq M|k_\xi(x)|\}, \\
	E_k(M,\xi)      &= \{x\in X: k(x,x)\leq M|k_\xi(x)|^2\}.
\end{align}
If a sequence $x_n\xto{k}\xi$ within a fixed set $\Gamma_k(M,\xi)$, then we say that
$x_n$ approaches $\xi$ $\Gamma_k$-wise and write $x_n\xto{\Gamma_k}\xi$. If $M$ is
small, then there might be no sequence in $\Gamma_k(\xi,M)$ that is $k$-convergent to $\xi$,
but if there is at least one such sequence, then we say that $\Gamma_k(M,\xi)$
\emph{approaches} $\xi$.
\\ Let $f:X\to\plane$, if for every sequence $x_n\xto{\Gamma_k}\xi$ we have
$\lim_{n\to\infty}f(x_n)=\lambda\in\plane$, then we say that $f$ has $\Gamma_k$-limit $\lambda$
at $\xi$. Of course the same definition is valid for any metric space in place of $\plane$.
\\ Let $t$ be a reproducing kernel on a nonempty set $Y$. For a map $\phi:Y\to X$ we will
similarly say that $\phi$ has a $\Gamma_k$-limit $\lambda\in Y\cup\rand_t Y$ at $\xi$ if
$\phi(x_n)\xto{t}\lambda$ as $n\to\infty$ for every sequence $x_n\xto{k}\xi$.
\end{definition}
\begin{remark}
It might not be true that $\Gamma_k(M,\xi)\subset E_k(M,\xi)$ however, for a
sequence $x_n\xto{k}\xi$ within $\Gamma_k(M,\xi)$ we know that
$k(x_n,x_n)\to\infty$ by \cref{lemma:boundary-explosion}. So  if $n$ is big enough to guarantee $k(x_n,x_n)>M$,
then $|k_\xi(x_n)|>1$ and therefore, for these $n$ we also have
$x_n\in E_k(M,\xi)$. In conclusion, the notion of $E_k$-limit is stronger
than $\Gamma_k$-limit.
\end{remark}

\subsection{Examples}
Here we list examples illustrating the reproductive boundary, approach regions and composition
factors of different reproducing kernels.

\begin{example}\label{example:szego}
Let $k(z,w) = (1-z\overline w)^{-1}$ be the Szegő kernel on $\disk$ corresponding to the
Hardy space $H^2$. Since $\frac1{1-z}$ has
an isolated pole at $1$ one easily sees that $\rand_k\disk$ can be identified with
$\torus$. Moreover, any sequence $(z_n)$ in $\disk$ converges to a point $\zeta\in\torus$
if and only if $z_n\xto{k}\zeta$.
\\ The approach regions $\Gamma_k$ and $E_k$ are seen to be the families of Stolz sectors
and horocycles, respectively, which in turn correspond to the notion of nontangential
and horocyclic boundary limits.
\\ A selfmap $\phi:\disk\to\disk$ belongs to $\Fact(k)$ if and only if $\phi$ is a
contractive multiplier of $H^2$ and is not a unimodular constant; the set of these are
precisely the analytic selfmaps of $\disk$.
The analytic automorphism of the disk $\psi_a$ that interchange $a$ and $0$ is
given by $\psi_a(z) = (a-z)/(1-z\overline a)$ and they
satisfy \begin{equation}\label{equation:disk-automorphism}
	1 - \psi_a(z)\overline{\psi_a(w)}
	= \frac{(1-|a|^2)(1-z\overline w)}{(1-z\overline a)(1-a\overline w)}
\end{equation}
A computation reveals that $\psi_a\in\Comp(k)$, therefore \cref{proposition:weighted-composition}
shows that $\Mult_1(k)\setminus\torus=\Comp(k)=\Fact(k)$.
\end{example}

\begin{example}[\cref{example:szego} continued]\label{example:szego-power}
Let $k(z,w) = \frac1{(1-z\overline w)^\alpha},\,\alpha>0$. For $\alpha<1$, $\alpha=1$, and
$\alpha>1$, then $\Hilbert(k)$ is a weighted Dirichlet space, see \cref{section:dirichlet},
$H^2$, and a standard weighted Bergman space, respectively.
The same considerations \cref{example:szego} hold, and the approach regions with respect
to $k$ are exactly the same for any $\alpha$. By \eqref{equation:disk-automorphism} one
sees that each analytic disk automorphism belongs to $\Fact(k)$ and also it is routine
to show that $\psi_a\in\Comp(k)$, hence \cref{proposition:weighted-composition} proves
that $\Fact(k)\subset\Comp(k)$.

For $\alpha\geq1$, then $\Fact(k)$ are precisely the analytic selfmaps of $\disk$.
While for $\alpha<1$, then they are strictly contained in this set. Letting $\alpha=\frac12$
and set $\phi(z)=z^2$, then \[
	\left(\frac{1-z^2\overline w^2}{1-z\overline w}\right)^\frac12
	= \sqrt{1+z\overline w},
\]
which is not a reproducing kernel, since expressed as a power series in $z\overline w$ it
has some negative coefficients. But we can say a few things. By \cref{proposition:transitive-composition},
each contractive multiplier of $\Hilbert(k)$ belong to $\Fact(k)$, which in particular
includes the contractive multipliers of the classical Dirichlet space. There is another
class of functions that belong to $\Fact(k)$. For any analytic selfmap of $\disk$ we
have \[
	\frac{1-\phi(z)\overline{\phi(w)}}{1-z\overline w} \geq 0.
\]
Chu \cite{chu_pick} classified exactly for which $\phi$ the above kernel
is complete Nevanlinna-Pick, see \cite{agler-mccarthy_cnp}, which is equivalent
to being able to write it as \[
	\frac1{1-\la u(z),u(w)\ra},
\]
where $u:X\to\ell^2$. Then by Schur's Product Theorem and the power series representation
of $(1-x)^{-\alpha}$ we obtain \[
	\frac{k(z,w)}{k\circ\phi(z,w)}
	= \left(\frac1{1-\la u(z),u(w)\ra}\right)^\alpha
	\geq 0.
\]
But by no means should this condition be necessary.
\end{example}

\begin{example}
Let $k(z,w) = \frac1{z\overline w}\log(\frac1{1-z\overline w}),\,z,w\in\disk$ be the reproducing
kernel for $\dirichlet$, the Dirichlet space on the unit disk, see
\cite{ross_dirichlet}. We have $\rand_k\disk=\torus$ and for $\zeta\in\torus$
we have the family approach regions equivalent to the two inequalities
\begin{align*}
	|\zeta - z| &\leq (1-|z|^2)^\frac1M, \\
	|\zeta - z| &\leq e^{-\sqrt{\frac1M\log{\frac1{1-|z|^2}}}},
\end{align*}
corresponding to $\Gamma_k$ and $E_k$, respectively. Kinney \cite{kinney_dirichlet}
showed that every function in $\dirichlet$ has $\Gamma_k$-limits almost
everywhere on $\torus$. This was later improved by
Nagel, Rudin, and Shapiro \cite{nagel_dirichlet} to the exponential approach
region defined by the inequality \[
	|\zeta - z| \leq M\log\left(\frac1{1-|z|^2}\right)^{-1}.
\]
In terms of reproducing kernels this compares the Szegő kernel and the Dirichlet kernel.
\end{example}

\begin{example}
Let $k(x,y) = \min(x,y)$ be defined on $[0,\infty)$, this reproducing kernel corresponds
to the absolutely continuous functions $f:[0,\infty)\to\reals$ with $f(0) = 0$ such that \[
	\int_0^\infty |f'(x)|^2dx < \infty,
\]
 given the inner product \[
	\la f,g\ra = \int_0^\infty f'(x)\overline{g'(x)}dx.
\]
Let $x_n\xto{k}\xi$, if the sequence $(x_n)$ is bounded, then the function $k_\xi$ is
given by $k_x$ for some $x\in[0,\infty)$. In the other case that $x_n\to\infty$, then
$k_{x_n}$ converges pointwise to the identity function $x\mapsto x$, which is not included
in $\Hilbert(k)$.
\\ Therefore, the reproductive boundary is the singleton containing the identity map $\{x\mapsto x\}$,
which we can identify as a point at $\infty$. By \cref{lemma:growth-restriction} each function
$f\in\Hilbert(k)$ satisfies $\frac{f(x)}{\sqrt{x}}\to0$ as $x\to\infty$.
\end{example}

\begin{example}
Let $k(z,w) = \frac{1-b(z)\overline{b(w)}}{1-z\overline w}$ be the de Branges-Rovnyak space with
$b(z) = \frac{z+1}2$. It is easily shown that if $w\to1$, then the functions $k_w$
converge pointwise to the constant function $z\mapsto\frac12$, which belongs to $\Hilbert(k)$,
therefore $\rand_k\disk = \torus\setminus\{1\}$.
\end{example}

\begin{example}\label{example:riemann-zeta}
Let $\zeta(s) = \sum_{n=1}^\infty n^{-s},\,\Re{s}>1$ be the Riemann zeta function and
consider its analytic continuation to $\plane\setminus\{1\}$.
Set $\plane_\frac12 = \{z\in\plane:\Re{z}>\frac12\}$ and
let $k(z,w) = \zeta(z + \overline w),\,z,w\in\plane_\frac12$, this reproducing kernel
correspond to the Hardy space of Dirichlet series, see \cite{hedenmalm-seip_hardy}.
In this case $\rand_k\plane_\frac12 = \{z\in\plane:\Re{z}=\frac12\}$, the critical
line. We lack geometrical descriptions of the $\Gamma_k$ and $E_k$ approach regions,
see \cref{question:riemann-zeta}.
\end{example}

\begin{example}
By the power series expansion of $e^x$ and Schur's Product Theorem we have that
$e^k\geq0$. A selfmap $\phi:X\to X$ belongs to $\Fact(e^k)$ if and only if \[
	\frac{e^k}{e^{k\circ\phi}} = e^{k-k\circ\phi}\geq0.
\]
Hence $\Fact(e^k)$ includes $\Comp_1(k)$.
\end{example}

\begin{example}
Let $k(z,w) = (1-\la z,w\ra)^\alpha,\,\alpha>0$ be a power of the Drury-Arveson
kernel on the unit ball $\ball_d$ of $\plane^d$, see \cite{hartz_drury-arveson}.
Then $\rand_k\ball_d = \rand\ball_d$ and the $\Gamma_k$ and $E_k$ approach regions
are the Korányi regions and horospheres, respectively. See \cref{section:besov} for
more details. Just as in $\disk$ one can prove that $\Fact(k)\subset\Comp(k)$.
\end{example}

\begin{example}
For $d>0$ let \[
	k(z,w) = \prod_{j=1}^d\frac1{1-z_j\overline w_j}, \qquad z,w\in\disk^d,
\]
be the reproducing kernel corresponding to the Hardy space on the polydisk $\disk^d$.
The same considerations as in \cref{example:szego} show that $\rand_k\disk^d=\rand\disk^d$,
the non distinguished boundary.
Suppose that $\phi(z) = \sigma(\phi_1(z_1),\ldots,\phi_d(z_d))$, where each
$\phi_j:\disk\to\disk$ and $\sigma$ is a permutation of $\{1,\ldots,d\}$.
Then $\phi\in\Fact(k)$ by Schur's Product Theorem, since each $\phi_j$ satisfies \[
	\frac{1-\phi_j(z_j)\overline{\phi_j(w_j)}}{1-z_j\overline w_j} \geq0.
\]
The analytic automorphisms of $\disk^d$ are given by $d$ automorphisms of each separate
disk of $\disk^d$ followed by a permutation \cite{rudin_polydisk}. Hence they
belong to $\Fact(k)$. Furthermore, they are also included in $\Comp(k)$ so therefore
$\Fact(k)\subset\Comp(k)$, a fact noted by Chu in the bidisk \cite{chu_bidisk}, but
he proved it more directly. The reverse inclusion is false, for example $z\mapsto(z_1,z_1)$
belongs to $\Comp(k)$ when $d=2$, but clearly \[
	\frac{1-z_1\overline w_1}{1-z_2\overline w_2} \ngeq 0.
\]
\end{example}

\begin{example}
If every nonconstant $\psi\in\Mult_1(k)$ satisfies $|\psi(x)|<1$, for example $\Hilbert(k)$ is a space
of analytic functions, then $\Mult_1(k)\setminus\torus=\Fact(k,t)$ where
$t(z,w)=(1-z\overline w)^{-1}$ is the Szegő kernel on $\disk$.
Furthermore, in this case \cref{proposition:transitive-composition} shows that if $\phi\in\Fact(k)$,
then $\psi\circ\phi\in\Mult_1(k)$.

Substituting $t$ for the weighted Bergman kernel $t(z,w)=(1-z\overline w)^{-n}$ one can identify
$\Fact(k,t)$ as the $n$-contractive multipliers on $\Hilbert(k)$ except $\torus$, see \cite{gu_higher-order}.
\end{example}

\begin{example}
If $\Hilbert(k)$ is a space of analytic functions on a domain of $\plane$ that is shift invariant,
that is, the identity function $z\mapsto z$ is in $\Mult(k)$, then $\Fact(k)\subset\Mult(k)$.
\end{example}

\section{Main theorem}
As previously mentioned, our main theorem requires some regularity assumptions. We restate
them here for the convenience of the reader.
\\ Let $t$ be a reproducing kernel on a nonempty set $Y$.
\begin{enumerate}[label=(\Alph*)]
\item
For every $\lambda\in\rand_tY$ there exists a constant $a_\lambda$ such that
$|t_\lambda(y)| \leq a_\lambda t(y,y), \,\forall y\in Y.$
\item
There is a constant $b>0$ such that $|t(x,y)|>b$ for all $x,y\in Y$.
\item
Let $\lambda\in\rand_tY$ and $(y_n)$ be a sequence in $Y$. If $|t_\lambda(y_n)|\to\infty$,
then $y_n\xto{t}\lambda$.
\item
If for a sequence $(y_n)$ in $Y$ $t(y_n,y_n)\to\infty$ as $n\to\infty$,
then there is a subsequence such that $y_{n_j}\xto{t}\lambda\in\rand_tY$.
\end{enumerate}
\begin{remark}
For every function $f\in\Hilbert(t)$ we have the estimate $|f(y)|\leq\|f\|k(y,y)^\frac12$,
\ref{boundary-to-diagonal} is a weaker form of this estimate on the boundary.
\ref{constants-inclusion} of course implies that $t$ is nonzero but also that
$|t_\lambda(x)|>0$ for all $x\in Y$ and $\lambda\in\rand_tY$, furthermore
it asserts that the constant function $x\mapsto1$ belongs to $\Hilbert(t)$.
The condition \ref{isolated-singularity} is summarized as each boundary function
having an isolated singularity.
Lastly \ref{compact-boundary} should be viewed as a compactness condition, see the
example below.
\end{remark}
\begin{example}\label{example:szego-conditions}
Let $t(z,w)=(1-z\overline w)^{-1}$ be the Szegő kernel on $\disk$, then for
$\lambda\in\torus$ we have $2|\lambda - z| \geq 1-|z|^2$, hence
\ref{boundary-to-diagonal} is satisfied.
Clearly $|t(z,w)|>1$ fulfilling \ref{constants-inclusion} and each boundary function
$t_\lambda$ has an isolated singularity at $\lambda$, hence it satisfies
\ref{isolated-singularity}.
Lastly $t(y,y)$ tends to $\infty$ if and only if $|y|\to1^-$ so
\ref{compact-boundary} follows by the compactness of $\torus$.
\end{example}

\begin{theorem}\label{theorem:main}
Let $k$ and $t$ be reproducing kernels on nonempty sets $X$ and $Y$, respectively.
Suppose that $t$ satisfies \ref{boundary-to-diagonal}-\ref{compact-boundary}.
For $\phi\in\Fact(k,t)$ and $\xi\in\rand_kX$, suppose that $\Gamma_k(M,\xi)$
approaches $\xi$ for some $M>0$, then the following are equivalent:
\begin{enumerate}[label=(\roman*)]
\item\label{item:liminf-condition} \[
	\liminf_{x\xto{k}\xi}\frac{k(x,x)}{t\circ\phi(x,x)} = c < \infty.
\]
\item\label{item:kernel-quotient}
There is a $\lambda\in\rand_tY$ such that the function defined by \[
	q_\xi(x) = \frac{k_\xi(x)}{t_\lambda(\phi(x))}
\]
belongs to $\Hilbert(\frac{k}{t\circ\phi})$.
\item\label{item:uniform-boundary-values}
All functions in $\Hilbert(\frac{k}{t\circ\phi})$ have a $\Gamma_k$-limit at $\xi$.
\end{enumerate}
Furthermore, if any of the above hold, $\phi$ has $E_k$-limit $\lambda$ at $\xi$,
and we have the estimate $0<\|q_\xi\|^2\leq c \leq (Ma_\lambda\|q_\xi\|)^2$ for
every $M$ such that $\Gamma_k(M,\xi)$ approaches $\xi$.
\end{theorem}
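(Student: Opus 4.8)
The plan is to transport condition~\ref{item:liminf-condition} into the Hilbert space $\Hilbert(K)$ of the quotient kernel $K=\frac{k}{t\circ\phi}$, so that $\phi\in\Fact(k,t)$ is exactly the assertion $K\geq0$, and then to mirror Sarason's Hilbert-space argument. The kernel functions of $\Hilbert(K)$ satisfy $\|K_x\|^2=K(x,x)=\frac{k(x,x)}{t\circ\phi(x,x)}$ and $K_x(y)=\frac{k(y,x)}{t(\phi(y),\phi(x))}$, so~\ref{item:liminf-condition} says precisely that $\liminf_{x\xto{k}\xi}\|K_x\|^2=c<\infty$. I would prove the cycle \ref{item:uniform-boundary-values}$\Rightarrow$\ref{item:liminf-condition}$\Rightarrow$\ref{item:kernel-quotient}$\Rightarrow$\ref{item:uniform-boundary-values}, then deduce the $E_k$-limit assertion from~\ref{item:kernel-quotient} on its own, reading off the norm estimates as they appear. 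Throughout, the point $\lambda\in\rand_tY$ is obtained from a sequence $(\phi(x_n))$ with $t(\phi(x_n),\phi(x_n))\to\infty$ via the compactness hypothesis~\ref{compact-boundary}, while~\ref{constants-inclusion} is what makes the quotients by $t_\lambda\circ\phi$ legitimate.

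For \ref{item:uniform-boundary-values}$\Rightarrow$\ref{item:liminf-condition}: since $\Gamma_k(M,\xi)$ approaches $\xi$, fix a sequence $x_n\xto{k}\xi$ inside it. For each $f\in\Hilbert(K)$ the scalars $f(x_n)=\la f,K_{x_n}\ra$ converge, hence are bounded, so the uniform boundedness principle gives $\sup_n\|K_{x_n}\|<\infty$, whence $c\leq\liminf_n\|K_{x_n}\|^2<\infty$. For \ref{item:liminf-condition}$\Rightarrow$\ref{item:kernel-quotient}: choose a sequence $x_n\xto{k}\xi$ with $\|K_{x_n}\|^2\to c$; by \cref{lemma:boundary-explosion}, $k(x_n,x_n)\to\infty$, hence $t\circ\phi(x_n,x_n)\to\infty$, and~\ref{compact-boundary} applied to $(\phi(x_n))$ gives, after passing to a subsequence, $\phi(x_n)\xto{t}\lambda$ for some $\lambda\in\rand_tY$. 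Then $t(\phi(y),\phi(x_n))\to t_\lambda(\phi(y))\neq0$ by~\ref{constants-inclusion}, so $K_{x_n}\to q_\xi$ pointwise; being norm-bounded, a subsequence converges weakly, necessarily to $q_\xi$ by the pointwise convergence, so $q_\xi\in\Hilbert(K)$ with $\|q_\xi\|^2\leq\liminf_n\|K_{x_n}\|^2=c$. Moreover $\|q_\xi\|^2>0$, because $\xi\in\rand_kX$ forces $k_\xi\notin\Hilbert(k)$, in particular $k_\xi\not\equiv0$.

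The step I expect to be the main obstacle is deducing from~\ref{item:kernel-quotient} that $\phi$ has $E_k$-limit $\lambda$ at $\xi$; this is an abstract form of Julia's lemma and must combine all four of~\ref{boundary-to-diagonal}-\ref{compact-boundary}. Applying the inclusion criterion~\eqref{inclusion-criterion} to $q_\xi\in\Hilbert(K)$ on the diagonal gives
\[
	\frac{|t_\lambda(\phi(x))|^2}{t(\phi(x),\phi(x))}\geq\frac1{\|q_\xi\|^2}\cdot\frac{|k_\xi(x)|^2}{k(x,x)},\qquad x\in X.
\]
Fix $M'>0$ and a sequence $x_n\xto{k}\xi$ with $x_n\in E_k(M',\xi)$; then $k(x_n,x_n)\leq M'|k_\xi(x_n)|^2$ and (by \cref{lemma:boundary-explosion}) $k(x_n,x_n)\to\infty$, so $|k_\xi(x_n)|\to\infty$. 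I would first exclude that $t\circ\phi(x_n,x_n)$ remains bounded along a subsequence: on such a subsequence $\|K_{x_n}\|\to\infty$, and since $|K_{x_n}(y)|\leq|k(y,x_n)|/b$ is bounded in $n$ (using~\ref{constants-inclusion}), the unit vectors $K_{x_n}/\|K_{x_n}\|$ converge to $0$ pointwise, hence weakly, so $q_\xi(x_n)=o(\|K_{x_n}\|)$; then $|k_\xi(x_n)|=|q_\xi(x_n)|\,|t_\lambda(\phi(x_n))|$ together with~\ref{boundary-to-diagonal} and~\ref{constants-inclusion} forces $|k_\xi(x_n)|^2=o(k(x_n,x_n))$, contradicting $x_n\in E_k(M',\xi)$. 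Hence $t\circ\phi(x_n,x_n)\to\infty$, so by~\ref{compact-boundary} every subsequence of $(\phi(x_n))$ has a further subsequence $t$-convergent to some $\mu\in\rand_tY$; the displayed inequality then yields $|t_\lambda(\phi(x_{n_j}))|^2\geq t\circ\phi(x_{n_j},x_{n_j})/(M'\|q_\xi\|^2)\to\infty$, so $\mu=\lambda$ by~\ref{isolated-singularity}, and by uniqueness of pointwise limits $\phi(x_n)\xto{t}\lambda$.

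It remains to prove \ref{item:kernel-quotient}$\Rightarrow$\ref{item:uniform-boundary-values} and the estimate. Given $x_n\xto{k}\xi$ in $\Gamma_k(M,\xi)$, \cref{lemma:boundary-explosion} shows the sequence eventually lies in $E_k(M,\xi)$, so by the previous paragraph $\phi(x_n)\xto{t}\lambda$ and hence $K_{x_n}\to q_\xi$ pointwise; moreover, from $x_n\in\Gamma_k(M,\xi)$, $|q_\xi(x_n)|\leq\|q_\xi\|\,\|K_{x_n}\|$ and~\ref{boundary-to-diagonal},
\[
	\|K_{x_n}\|^2=\frac{k(x_n,x_n)}{t\circ\phi(x_n,x_n)}\leq\frac{M\,|k_\xi(x_n)|}{t\circ\phi(x_n,x_n)}=\frac{M\,|q_\xi(x_n)|\,|t_\lambda(\phi(x_n))|}{t\circ\phi(x_n,x_n)}\leq Ma_\lambda\|q_\xi\|\,\|K_{x_n}\|,
\]
so $\|K_{x_n}\|\leq Ma_\lambda\|q_\xi\|$. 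Therefore $K_{x_n}\to q_\xi$ weakly and $f(x_n)=\la f,K_{x_n}\ra\to\la f,q_\xi\ra$ for every $f\in\Hilbert(K)$, which is~\ref{item:uniform-boundary-values}; since this holds for every such sequence in every $\Gamma_k(M,\xi)$ that approaches $\xi$, we also obtain $c\leq(Ma_\lambda\|q_\xi\|)^2$, completing the chain $0<\|q_\xi\|^2\leq c\leq(Ma_\lambda\|q_\xi\|)^2$ and in particular $c>0$.
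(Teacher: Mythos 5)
Your proposal is correct, and its overall architecture coincides with the paper's: both work in $\Hilbert(q)$ for the quotient kernel $q=\frac{k}{t\circ\phi}$, prove \ref{item:uniform-boundary-values}$\Rightarrow$\ref{item:liminf-condition} by uniform boundedness, \ref{item:liminf-condition}$\Rightarrow$\ref{item:kernel-quotient} by \cref{lemma:boundary-explosion}, weak compactness and \ref{compact-boundary} with the same identification of the weak limit, and \ref{item:kernel-quotient}$\Rightarrow$\ref{item:uniform-boundary-values} by the same norm bound $\|q_x\|\leq Ma_\lambda\|q_\xi\|$ on $\Gamma_k(M,\xi)$ via \ref{boundary-to-diagonal} and Cauchy--Schwarz, which also yields the estimate $0<\|q_\xi\|^2\leq c\leq(Ma_\lambda\|q_\xi\|)^2$. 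The one step where you genuinely diverge is the derivation of the $E_k$-limit of $\phi$ from \ref{item:kernel-quotient}: the paper transfers $q_\xi$ into $\Hilbert(k)$ using \cref{proposition:weighted-composition} together with the consequence of \ref{constants-inclusion} that the constant function lies in $\Hilbert(t)$, and then applies \cref{lemma:growth-restriction} in $\Hilbert(k)$; you instead stay inside $\Hilbert(q)$, use the inclusion criterion \eqref{inclusion-criterion} (i.e. Cauchy--Schwarz) on the diagonal, and re-derive the needed little-o growth estimate by a case split, ruling out bounded $t\circ\phi(x_n,x_n)$ via the normalized kernel functions of $q$ tending weakly to zero. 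This buys independence from the weighted-composition proposition and from the claim that \ref{constants-inclusion} forces $1\in\Hilbert(t)$, and it sidesteps the fact that \cref{lemma:growth-restriction} cannot be quoted verbatim for $q$ (since $\xi$ need not be a boundary point of the quotient kernel), at the cost of the extra contradiction argument; the invocation of \ref{compact-boundary} and the subsequence/$\mu$ detour at the end of that step is redundant, since once $t\circ\phi(x_n,x_n)\to\infty$ your displayed inequality gives $|t_\lambda(\phi(x_n))|\to\infty$ along the whole sequence and \ref{isolated-singularity} concludes directly, exactly as in the paper.
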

\begin{proof}
Fix the notation $q=\frac{k}{t\circ{\phi}}$.

Suppose that \ref{item:liminf-condition} then by assumption there exists a
sequence $x_n\xto{k}\xi$ such that $\|q_{x_n}\|^2 = q(x_n,x_n)\to c$. By
\cref{lemma:boundary-explosion}, $k(x_n,x_n)\to\infty$ as $n\to\infty$,
therefore $t\circ\phi(x_n,x_n)\to\infty$ as $n\to\infty$. Hence, by weak
compactness and \ref{compact-boundary}, passing to a subsequence if
necessary, there is a function $q_\xi\in\Hilbert(q)$ such that $q_{x_n}\xto{w}q_\xi$,
as well as a $\lambda\in\rand_tY$ such that $\phi(x_n)\xto{t}\lambda$. We
compute the function $q_\xi$ as \begin{equation}\label{equation:qxi-computation}
	q_\xi(y)
	= \la q_\xi, q_y\ra
	= \lim_{n\to\infty}\la q_{x_n}, q_y\ra
	= \lim_{n\to\infty}\frac{k_{x_n}(y)}{t(\phi(y),\phi(x_n))}
	= \frac{k_\xi(y)}{t_\lambda(\phi(y))}.
\end{equation}

Suppose that \ref{item:kernel-quotient}, and let $x_n\xto{E_k}\xi$.
\cref{proposition:weighted-composition} together with \ref{constants-inclusion}
says that the function $q_\xi$ belongs to $\Hilbert(k)$. Therefore, by \cref{lemma:growth-restriction} \[
	\frac{q_\xi(x_n)}{k(x_n,x_n)}
	= \frac{k_\xi(x_n)}{t_\lambda(\phi(x_n))k(x_n,x_n)^\frac12}
	\to 0.
\]
But since $x_n\in E_k(M,\xi)$ for some fixed $M>0$, that is,
$\frac{|k_\xi(x_n)|}{k(x_n,x_n)^\frac12}\geq\frac1M$, we have
$|t_\lambda(\phi(x_n))| \to \infty$, hence $\phi(x_n)\xto{t}\lambda$
because of \ref{isolated-singularity}.
\\ Now let $x_n\to\xi$ within $\Gamma_k(M,\xi)$ for some fixed $M>0$,
\ref{item:uniform-boundary-values} is equivalent to showing that the
functions $q_{x_n}$ tend weakly to $q_\xi$ as $n\to\infty$. In the previous
paragraph we established pointwise convergence, since the linear span of
$\{q_x:x\in X\}$ is dense in $\Hilbert(q)$ it is then enough to prove that the
sequence $(q_{x_n})$ is bounded in norm. To this end, we estimate as follows,
for any $x\in\Gamma_k(M,\xi)$ we have \begin{equation}\label{equation:c-norm-estimate}
	\|q_{x}\|^2
	= \frac{k(x,x)}{t\circ\phi(x,x)}
	\leq \frac{|k_\xi(x)|}{t\circ\phi(x,x)}
	\leq Ma_\lambda\frac{|k_\xi(x)|}{|t_\lambda(\phi(x))|}
	\leq Ma_\lambda\|q_\xi\|\|q_x\|,
\end{equation}
where the three inequalities are given by $x\in\Gamma_k(M,\xi)$,
\ref{boundary-to-diagonal}, and Cauchy-Schwarz, respectively.

Suppose that \ref{item:uniform-boundary-values}, as stated previously,
this is equivalent to the sequence $(q_{x_n})$ tending weakly to $q_\xi$
whenever $x_n\xto{\Gamma_k}\xi$. Therefore the implication to \ref{item:liminf-condition}
is given by the principle of uniform boundedness.

The computation \eqref{equation:qxi-computation} reveals that $c\geq\|q_\xi\|^2$, which in
turn satisfies $\|q_\xi\|>0$, for otherwise $q_\xi$ is the zero function so $k_\xi$ is
as well, contradicting the fact that $k_\xi\notin\Hilbert(k)$.
In the proof of $(ii)\implies(iii)$ it was established that $\phi$ has $E_k$-limit
$\lambda$ at $\xi$.
The final desired estimate was given by \eqref{equation:c-norm-estimate}.
\end{proof}

\begin{example}
Let $\ball_d,\,d>1$ be the unit ball in $\plane^d$ and let
$t(z,w) = (1-\la z,w\ra)^{-1},\,z,w\in\plane^d$ be the reproducing kernel
for the Drury-Arveson space. Similar considerations as in \cref{example:szego-conditions}
show that $t$ satisfies \ref{boundary-to-diagonal}-\ref{compact-boundary}. This
holds even for the infinite case when $\ball_\natural$ is the unit ball of $\ell^2$ when considering
weak compactness. Importantly, for a reproducing kernel $k$ on a nonempty set $X$,
the class of functions $\Fact(k,t)$ are precisely the contractive multipliers from
$\Hilbert(k)\otimes\plane^d$ into $\Hilbert(k)$, where $\plane^d$ is replaced by $\ell^2$ in
the infinite dimensional case.
\\ Likewise the conditions are fulfilled for any kernel of the form $t(z,w)^\alpha,\,\alpha>0$.
\end{example}

The example above leads to the following corollary concerning contractive row multipliers.
\begin{corollary}\label{corollary:row-contractive-multiplier}
Let $k$ be a reproducing kernel on a nonempty set $X$. For a contractive multiplier $\phi$
from $\Hilbert(k)\otimes\ell^2$ into $\Hilbert(k)$ and $\xi\in\rand_kX$, suppose that
$\Gamma_k(M,\xi)$ approaches $\xi$ for some $M>0$, then the following are equivalent:
\begin{enumerate}[label=(\roman*)]
\item \[
	\liminf_{x\xto{k}\xi}k(x,x)(1-\|\phi(x)\|^2) = c < \infty.
\]
\item There is a $\lambda\in\ell^2$ with $\|\lambda\|=1$ such that the function defined by \[
	x\mapsto k_\xi(x)(1-\la\phi(x),\lambda\ra)
\]
belongs to $\Hilbert(k(1-\phi\phi^*))$.
\item All functions in $\Hilbert(k(1-\phi\phi^*))$ have a $\Gamma_k$-limit at $\xi$.
\end{enumerate}
Furthermore, if any of the above hold, then $\phi$ has a $E_k$-limit $\lambda$ at $\xi$,
and we have the estimate $0<\|q_\xi\|^2\leq c\leq (Ma_\lambda\|q_\xi\|)^2$ for
every $M$ such that $\Gamma_k(M,\xi)$ approaches $\xi$.
\end{corollary}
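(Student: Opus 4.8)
The plan is to obtain this corollary as a direct specialization of \cref{theorem:main}, taking $Y=\ball_\natural$ to be the unit ball of $\ell^2$ and $t(z,w)=(1-\la z,w\ra)^{-1}$ the Drury--Arveson kernel on it. The preceding example records the two inputs I need: first, that $t$ satisfies \ref{boundary-to-diagonal}--\ref{compact-boundary} even in the infinite-dimensional case (the compactness in \ref{compact-boundary} still being available via weak compactness of the unit ball of $\Hilbert(t)$); and second, that $\Fact(k,t)$ is precisely the class of contractive multipliers from $\Hilbert(k)\otimes\ell^2$ into $\Hilbert(k)$. Hence the given $\phi$ is a member of $\Fact(k,t)$, and \cref{theorem:main} applies to the pair $(\phi,\xi)$ under the standing hypothesis that $\Gamma_k(M,\xi)$ approaches $\xi$.

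The remaining work is to translate the three conditions into row-multiplier language. First I would identify $\rand_t\ball_\natural$ with the unit sphere $\{\lambda\in\ell^2:\|\lambda\|=1\}$: a sequence $(w_n)$ in $\ball_\natural$ makes $t_{w_n}$ converge pointwise on $\ball_\natural$ exactly when $\la z,w_n\ra$ converges for every $z$, i.e.\ when $w_n$ converges weakly, and the resulting pointwise limit $(1-\la\cdot,\lambda\ra)^{-1}$ of $t_{w_n}$ fails to belong to $\Hilbert(t)$ precisely when $\|\lambda\|=1$ (by the weak-convergence criterion and \cref{lemma:boundary-explosion}). With this identification the translation is routine: $t\circ\phi(x,x)=(1-\|\phi(x)\|^2)^{-1}$ turns \ref{item:liminf-condition} into the stated $\liminf$; $t_\lambda(\phi(x))=(1-\la\phi(x),\lambda\ra)^{-1}$ turns the function $q_\xi$ of \ref{item:kernel-quotient} into $x\mapsto k_\xi(x)(1-\la\phi(x),\lambda\ra)$; and $\tfrac{k}{t\circ\phi}(x,y)=k(x,y)(1-\la\phi(x),\phi(y)\ra)$ is exactly the kernel denoted $k(1-\phi\phi^*)$, so that \ref{item:kernel-quotient} and \ref{item:uniform-boundary-values} reproduce (ii) and (iii) verbatim.

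For the final clause I would note that \cref{theorem:main} delivers an $E_k$-limit $\lambda\in\rand_t\ball_\natural$ of $\phi$ at $\xi$; since any contractive multiplier satisfies $\|\phi(x)\|\leq 1$ while $\|\lambda\|=1$, the $t$-convergence $\phi(x_n)\xto{t}\lambda$ (which is weak convergence in $\ell^2$) automatically improves to norm convergence, giving the stated $E_k$-limit. The norm estimate $0<\|q_\xi\|^2\leq c\leq(Ma_\lambda\|q_\xi\|)^2$ is inherited unchanged, with $a_\lambda$ the constant from \ref{boundary-to-diagonal} for $t$, which just as in \cref{example:szego-conditions} one checks may be taken to be $2$. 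I do not anticipate a genuine obstacle here: the only substantive ingredient is the infinite-dimensional regularity of the Drury--Arveson kernel, which the preceding example already supplies, and everything else is the bookkeeping above.
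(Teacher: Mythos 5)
Your proposal is exactly the paper's (implicit) argument: the corollary is obtained by specializing \cref{theorem:main} to $t$ the Drury--Arveson kernel on the unit ball of $\ell^2$, invoking the preceding example for conditions \ref{boundary-to-diagonal}--\ref{compact-boundary} and for the identification $\Fact(k,t)=$ contractive multipliers $\Hilbert(k)\otimes\ell^2\to\Hilbert(k)$, and then translating \ref{item:liminf-condition}--\ref{item:uniform-boundary-values} via $t\circ\phi(x,y)=(1-\la\phi(x),\phi(y)\ra)^{-1}$. Your added remarks (identifying $\rand_t\ball_\natural$ with the unit sphere, taking $a_\lambda=2$, and upgrading the weak $E_k$-limit to norm convergence since $\|\lambda\|=1$) are correct refinements of the same route.
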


\section{Applications}

\subsection{Julia's lemma and iteration in the case where $t = k$}\label{section:iteration}
Firstly we have a general version of Julia's lemma.
\begin{lemma}\label{lemma:julia}
Let $k$ be a reproducing kernel on a nonempty set $X$ that satisfies
\ref{boundary-to-diagonal}-\ref{compact-boundary}. For $\xi\in\rand_kX$
suppose that $\phi\in\Fact(k)$ satisfies one of
\ref{item:liminf-condition}-\ref{item:uniform-boundary-values} in \cref{theorem:main},
denote the $E_k$ boundary value of $\phi$ at $\xi$ by $\phi(\xi)$, then \begin{equation}
	\frac{k(\phi(x),\phi(x))}{|k_{\phi(\xi)}(\phi(x))|^2}
	\leq c\frac{k(x,x)}{|k_\xi(x)|^2},
\end{equation}
where $c$ is the constant in \ref{item:liminf-condition}.
\\ In other words, we obtain $\phi(E_k(M,\xi)) \subset E_k(cM,\phi(\xi))$.
\end{lemma}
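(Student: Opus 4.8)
The plan is to deduce the pointwise inequality directly from the membership statement $q_\xi\in\Hilbert(\frac{k}{k\circ\phi})$ furnished by \cref{theorem:main}, by invoking the inclusion criterion \eqref{inclusion-criterion} and specializing it to the diagonal. First I would note that since $\phi\in\Fact(k)$ satisfies one of \ref{item:liminf-condition}--\ref{item:uniform-boundary-values}, \cref{theorem:main} (with $t=k$) applies and yields a point $\lambda=\phi(\xi)\in\rand_kX$, the constant $c$ of \ref{item:liminf-condition}, and the function
\[
	q_\xi(x) = \frac{k_\xi(x)}{k_{\phi(\xi)}(\phi(x))},
\]
which lies in $\Hilbert(q)$, $q=\frac{k}{k\circ\phi}$, with $0<\|q_\xi\|^2\leq c$. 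Before manipulating the quotient I would record that assumption \ref{constants-inclusion} forces $|k_{\phi(\xi)}(\phi(x))|\geq b>0$ and $|k_\xi(x)|\geq b>0$ for every $x\in X$ (both are pointwise limits of kernel values, each bounded below by $b$ in modulus), so all the denominators appearing below are legitimately nonzero.

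Next, applying \eqref{inclusion-criterion} to $q_\xi\in\Hilbert(q)$ gives $\|q_\xi\|^2 q(x,y) - q_\xi(x)\overline{q_\xi(y)}\geq 0$; putting $y=x$ yields the scalar inequality
\[
	\|q_\xi\|^2\,\frac{k(x,x)}{k(\phi(x),\phi(x))} \;\geq\; \frac{|k_\xi(x)|^2}{|k_{\phi(\xi)}(\phi(x))|^2}.
\]
Multiplying through by the positive quantity $\frac{k(\phi(x),\phi(x))}{|k_\xi(x)|^2}$ and using $\|q_\xi\|^2\leq c$ rearranges this into
\[
	\frac{k(\phi(x),\phi(x))}{|k_{\phi(\xi)}(\phi(x))|^2} \;\leq\; \|q_\xi\|^2\,\frac{k(x,x)}{|k_\xi(x)|^2} \;\leq\; c\,\frac{k(x,x)}{|k_\xi(x)|^2},
\]
which is precisely the asserted estimate (and in fact gives the slightly sharper bound with $\|q_\xi\|^2$ in place of $c$).

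Finally I would translate this into the stated containment: if $x\in E_k(M,\xi)$, i.e.\ $k(x,x)\leq M|k_\xi(x)|^2$, then the displayed inequality gives $\frac{k(\phi(x),\phi(x))}{|k_{\phi(\xi)}(\phi(x))|^2}\leq cM$, so $\phi(x)\in E_k(cM,\phi(\xi))$, whence $\phi(E_k(M,\xi))\subset E_k(cM,\phi(\xi))$. I do not expect a genuine obstacle here; the only point requiring care is the well-definedness of the two quotients, which is exactly what \ref{constants-inclusion} is there to supply, and the bookkeeping of which of $\|q_\xi\|^2$ and $c$ to record in the final statement.
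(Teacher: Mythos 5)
Your proposal is correct and is essentially the paper's proof: the diagonal instance of the inclusion criterion, $\|q_\xi\|^2 q(x,x)\geq|q_\xi(x)|^2$, is exactly the Cauchy--Schwarz estimate $|q_\xi(x)|^2=|\la q_\xi,q_x\ra|^2\leq\|q_\xi\|^2\,\frac{k(x,x)}{k\circ\phi(x,x)}$ that the paper uses, combined with the bound $\|q_\xi\|^2\leq c$ from \cref{theorem:main}. Your added remarks on the nonvanishing of the denominators via \ref{constants-inclusion} and the explicit derivation of $\phi(E_k(M,\xi))\subset E_k(cM,\phi(\xi))$ are fine bookkeeping that the paper leaves implicit.
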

\begin{proof}
Let $q_\xi = \frac{k_\xi}{k_{\phi(\xi)}}$, this function belongs to $\Hilbert(\frac{k}{k\circ\phi})$
by (ii), and in \cref{theorem:main} it is noted that
$\|q_\xi\|^2\leq c$. Therefore by Cauchy-Schwarz we have \[
	\left|\frac{k_\xi(x)}{k_{\phi(\xi)}(x)}\right|^2
	= |q_\xi(x)|^2
	\leq \|q_\xi\|^2\frac{k(x,x)}{k\circ\phi(x,x)}
	\leq c\frac{k(x,x)}{k\circ\phi(x,x)}.
\]
\end{proof}

The set inclusion above suggests the following iteration result. Essentially it
says that (some) Denjoy-Wolff points are unique, what about the case $c=1$?
See \cref{question:iteration}.
\begin{theorem}\label{theorem:iteration}
Let $k$ be a reproducing kernel on a nonempty set $X$ that satisfies
\ref{boundary-to-diagonal}-\ref{compact-boundary}.
If $\phi\in\Fact(k)$ satisfies one of
\ref{item:liminf-condition}-\ref{item:uniform-boundary-values} in \cref{theorem:main}
for $\xi\in\rand_kX$ with $c < 1$ and $\phi$ has $E_k$-limit $\xi$ at $\xi$, then the sequence of
iterates $\phi^n(x)\xto{k}\xi$ for every $x\in X$.
\\ In particular $\phi$ has no fixed point in $X$.
\end{theorem}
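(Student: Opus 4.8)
The plan is to iterate the abstract Julia lemma \cref{lemma:julia}. The hypothesis that $\phi$ has $E_k$-limit $\xi$ at $\xi$ says precisely that the $E_k$-boundary value $\phi(\xi)$ appearing in \cref{lemma:julia} equals $\xi$, so that lemma reads
\[
	\phi\bigl(E_k(M,\xi)\bigr)\subset E_k(cM,\xi)\qquad\text{for every }M>0.
\]
Fix $x\in X$. By \ref{constants-inclusion} we have $|k_\xi(x)|>0$, so $x$ lies in $E_k(M_0,\xi)$ with $M_0 = k(x,x)/|k_\xi(x)|^2\in(0,\infty)$. Applying the inclusion $n$ times gives $\phi^n(x)\in E_k(c^nM_0,\xi)$ for every $n$, i.e.
\[
	k\bigl(\phi^n(x),\phi^n(x)\bigr) \le c^n M_0\,\bigl|k_\xi(\phi^n(x))\bigr|^2 .
\]

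Next I would turn this into an explosion of the diagonal. Condition \ref{boundary-to-diagonal} gives $|k_\xi(\phi^n(x))|\le a_\xi\,k(\phi^n(x),\phi^n(x))$; inserting this into the last display and cancelling one factor of $k(\phi^n(x),\phi^n(x))$, which is positive by \ref{constants-inclusion}, yields
\[
	k\bigl(\phi^n(x),\phi^n(x)\bigr)\ \ge\ \frac{c^{-n}}{M_0\,a_\xi^2}.
\]
Since $0<c<1$ — recall $c>0$ is part of the conclusion of \cref{theorem:main}, while $c<1$ is assumed — the right-hand side tends to $\infty$. Feeding this back into the first display (and using $c^n\le1$) gives $|k_\xi(\phi^n(x))|^2\ge k(\phi^n(x),\phi^n(x))/M_0\to\infty$, hence $|k_\xi(\phi^n(x))|\to\infty$. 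Now \ref{isolated-singularity}, applied to the sequence $(\phi^n(x))_n$, gives $\phi^n(x)\xto{k}\xi$, which is the assertion. For the final statement, a fixed point $x_0=\phi(x_0)$ would make $(\phi^n(x_0))_n$ constant, contradicting $k(\phi^n(x_0),\phi^n(x_0))\to\infty$; equivalently, its $k$-limit would be $k_{x_0}\in\Hilbert(k)$, not a boundary function.

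The one point that needs care — and the reason the hypothesis ``$\phi$ has $E_k$-limit $\xi$ at $\xi$'' is exactly the right one — is that the iteration of \cref{lemma:julia} closes up only when the target point of the inclusion does not move, i.e. when $\phi(\xi)=\xi$; otherwise one would merely control $\phi(E_k(M,\xi))$ by a horoball centred at a possibly different boundary point. Once that is in place, the rest is a short chain in which \ref{boundary-to-diagonal} converts the contraction factor $c<1$ into genuine unbounded growth of $k(\phi^n(x),\phi^n(x))$, and \ref{isolated-singularity} upgrades that growth to $k$-convergence to the specific point $\xi$. It is worth noting that, unlike the proof of \cref{theorem:main}, this argument uses no weak-compactness step; \ref{compact-boundary} enters only through the invocations of \cref{theorem:main} and \cref{lemma:julia}.
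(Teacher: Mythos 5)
Your proof is correct and takes essentially the same route as the paper: iterate \cref{lemma:julia} with the fixed target point $\xi$, deduce $|k_\xi(\phi^n(x))|\to\infty$, and conclude with \ref{isolated-singularity}. The only cosmetic difference is that the paper obtains the blow-up directly from the lower bound $k(y,y)>b$ in \ref{constants-inclusion}, whereas you detour through \ref{boundary-to-diagonal} to first force $k(\phi^n(x),\phi^n(x))\to\infty$ before feeding it back; both are fine.
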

\begin{proof}
Firstly, since $|k_\xi(x)|>0$ we have $\cup_{M>0}E_k(M,\xi)=X$. So for each $x\in X$
there exists an $M>0$ such that $x\in E_k(M,\xi)$. By \cref{lemma:julia} we have
$\phi^n(x)\in E_k(c^nM,\xi)$, that is \[
	\frac1{c^nM}k(\phi^n(x),\phi^n(x)) \leq |k_\xi(\phi^n(x))|.
\]
Since $k$ satisfies \ref{constants-inclusion}, the left-hand side goes to $\infty$
as $n\to\infty$. Hence $\phi^n(x)\xto{k}\xi$ because of \ref{isolated-singularity}.
\end{proof}

\subsection{Standard weighted Dirichlet spaces on the unit disk}\label{section:dirichlet}
Throughout this section let $k(z,w) = (1-z\overline w)^{-\alpha},\,0<\alpha<1$ be defined
on $\disk$. We shall the denote the corresponding space $\Hilbert(k)$ as $\mathcal{D}_\alpha$.
It consists of analytic functions on $\disk$ such that \[
	\|f\|^2 \coloneq |f(0)|^2 + \int_\disk|f'(z)|^2(1-|z|^2)^\alpha dA(z) < \infty,
\]
where $dA$ is the normalized area measure on $\disk$. As noted in \cref{example:szego-power},
we can identify  $\rand_k\disk = \{\zeta\in\torus: z\mapsto (1-z\overline\zeta)^\alpha\}$ with $\torus$,
and it is apparent that a sequence $(z_n)$ in $\disk$ converges to
$\zeta\in\torus$ if and only if $z_n\xto{k}\zeta$, so $k$-convergence overlaps with
the Euclidean metric.
The family of approach regions for $\zeta\in\torus$ given by $k$ are \begin{align*}
	\Gamma_k(M, \zeta) &= \{z\in\disk:|\zeta-z|  \leq M(1-|z|^2)\} \\
	E_k(M,\zeta)       &= \{z\in\disk:|\zeta-z|^2\leq M(1-|z|^2)\},
\end{align*}
for $M>0$. They are the familiar nontangential and horocyclic approach regions, respectively.

The goal of this section is to prove the following. The condition (i) also appears in section
14 of \cite{hartz_free-outer}.
\begin{corollary}\label{corollary:dirichlet}
Fix $\zeta\in\torus$ and let $\phi$ be a contractive multiplier of $\dirichlet_\alpha$, then
the following are equivalent: \begin{enumerate}[label=(\roman*)]
\item \[
	\liminf_{z\to\zeta}\frac{1-|\phi(z)|^2}{(1-|z|^2)^\alpha} = c < \infty.
\]
\item There is a $\lambda\in\torus$ such that the difference quotient \[
	q_\zeta = \frac{1-\phi(z)\overline\lambda}{(1-z\overline\zeta)^\alpha}
\]
has a nontangential limit $c$ at $\zeta$.
\item The function $z\mapsto \phi'(z)(1-z\overline\zeta)^{1-\alpha}$ has nontangential
limit $c\lambda\overline\zeta\alpha$ at $\zeta$.
\end{enumerate}
Furthermore $c>0$ and $\phi$ has horocyclic limit $\lambda$ at $\zeta$.
\end{corollary}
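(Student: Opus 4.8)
The plan is to deduce Corollary~\ref{corollary:dirichlet} from \cref{theorem:main} applied with $k(z,w)=(1-z\overline w)^{-\alpha}$ on $X=\disk$ and $t(z,w)=(1-z\overline w)^{-1}$ the Szeg\H o kernel on $Y=\disk$. First I would check the hypotheses: \cref{example:szego-conditions} shows $t$ satisfies \ref{boundary-to-diagonal}--\ref{compact-boundary} (with $a_\lambda=2$ for every $\lambda\in\torus$), and $\rand_k\disk$ is identified with $\torus$ as in \cref{example:szego-power}, with $k$-convergence equal to Euclidean convergence, so the family $\Gamma_k(M,\zeta)$ approaches $\zeta$ for every $M>0$ and ``$\Gamma_k$-limit at $\zeta$'' is exactly ``nontangential limit at $\zeta$''. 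Next I must verify that a contractive multiplier $\phi$ of $\dirichlet_\alpha$ lies in $\Fact(k,t)$, i.e. that $\frac{1-\phi(z)\overline{\phi(w)}}{(1-z\overline w)^{-\alpha}}\cdot(1-z\overline w)^{-\alpha}\ge0$ — no: the relevant kernel is $\frac{k}{t\circ\phi}=\frac{(1-z\overline w)^{-\alpha}}{(1-\phi(z)\overline{\phi(w)})^{-1}}=(1-\phi(z)\overline{\phi(w)})(1-z\overline w)^{-\alpha}$, and this is positive semidefinite because $\phi\in\Mult_1(\dirichlet_\alpha)$ means $(1-\phi(z)\overline{\phi(w)})(1-z\overline w)^{-\alpha}\ge0$ by the multiplier criterion in \S2.1; so $\phi\in\Fact(k,t)$ with the quotient kernel $\frac{k}{t\circ\phi}$ being precisely the de Branges--Rovnyak-type kernel whose Hilbert space appears in the theorem. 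With these identifications, conditions \ref{item:liminf-condition} and \ref{item:kernel-quotient} of \cref{theorem:main} translate verbatim into (i) and the existence of $\lambda\in\torus$ with $q_\zeta\in\Hilbert(\frac{k}{t\circ\phi})$; that $q_\zeta$ then has nontangential limit exactly $c$ at $\zeta$, and that $0<c$, follows from the furthermore clause of \cref{theorem:main} (it gives $q_{z_n}\xto{w}q_\zeta$ along any $\Gamma_k$-sequence, hence $q_\zeta(z_n)=\la q_{z_n},q_\zeta\ra\to\|q_\zeta\|^2$, and $\|q_\zeta\|^2\le c\le (2\|q_\zeta\|)^2$ combined with $H^2$-type pointwise-to-weak convergence pins the value at $c$; I would spell out why the weak limit forces the \emph{value} $c$ rather than merely $\le c$). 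The horocyclic limit statement for $\phi$ is the $E_k$-limit conclusion of \cref{theorem:main}, since $E_k(M,\zeta)$ is the horocycle.

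The one genuinely new piece of work is the equivalence with (iii), the angular-derivative-type statement, which is \emph{not} part of the abstract theorem. Here I would argue as in the classical Sarason reduction. Assuming (ii), write $\phi(z)=\lambda(1-(1-z\overline\zeta)^\alpha q_\zeta(z))$; then $\phi'(z)=\lambda\big(\alpha\overline\zeta(1-z\overline\zeta)^{\alpha-1}q_\zeta(z)-(1-z\overline\zeta)^\alpha q_\zeta'(z)\big)$, so
\begin{equation*}
	\phi'(z)(1-z\overline\zeta)^{1-\alpha}
	= \lambda\alpha\overline\zeta\, q_\zeta(z) - \lambda(1-z\overline\zeta)\,q_\zeta'(z).
\end{equation*}
The first term has nontangential limit $\lambda\alpha\overline\zeta c$ by (ii), so it remains to show $(1-z\overline\zeta)q_\zeta'(z)\to0$ nontangentially. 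This is where membership $q_\zeta\in\Hilbert(\frac{k}{t\circ\phi})$ is used rather than just its boundary value: a function $g$ in such a space satisfies the growth bound $|g(z)|^2\le\|g\|^2\,\frac{(1-\phi(z)\overline{\phi(z)})}{(1-|z|^2)^\alpha}$, and one also controls $g'$ — I would use that $z\mapsto (1-z\overline\zeta)q_\zeta'(z)$ is, up to bounded factors, again of the form (multiplier)$\times$(something tending to $0$ nontangentially), exploiting that $1-\phi(z)\overline\lambda\to0$ nontangentially at $\zeta$ together with the Cauchy estimate for $q_\zeta'$ on shrinking disks inside a Stolz angle. Concretely: on a Stolz angle the ratio $\frac{1-|\phi(z)|^2}{1-|z|^2}$ stays bounded (Julia's lemma, \cref{lemma:julia} transported to $\disk$, or directly from (i)), hence $\frac{1-\phi(z)\overline\lambda}{1-z\overline\zeta}$ is bounded there, hence $q_\zeta(z)=(1-z\overline\zeta)^{1-\alpha}\cdot\frac{1-\phi(z)\overline\lambda}{1-z\overline\zeta}\to0$ if $\alpha<1$ is false — wait, $q_\zeta$ has nonzero limit $c$, so instead I track $q_\zeta(z)-c=O(1-z\overline\zeta)$ in a Stolz angle (a standard consequence of the nontangential limit plus the $H^2$-membership, via the representation of $q_\zeta$ as a Cauchy-type integral), and then Cauchy's estimate on a disk of radius $\sim(1-|z|)$ centered at $z$ gives $|q_\zeta'(z)|=O(1)$ and in fact $(1-z\overline\zeta)q_\zeta'(z)\to0$.

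For the converse, assuming (iii), I would show $q_\zeta$ has a nontangential limit: from the displayed identity, $\lambda\alpha\overline\zeta q_\zeta(z)=\phi'(z)(1-z\overline\zeta)^{1-\alpha}+\lambda(1-z\overline\zeta)q_\zeta'(z)$, and since $q_\zeta\in\Hilbert(\frac{k}{t\circ\phi})$ (which we get for free: $\phi\in\Mult_1(\dirichlet_\alpha)$ forces $\frac{1-\phi\overline\lambda}{(1-z\overline\zeta)^\alpha}$ into that space for \emph{some} $\lambda\in\torus$ once (iii) provides the candidate $\lambda=$ nontangential limit of $\phi$, which exists by (iii)), the term $(1-z\overline\zeta)q_\zeta'(z)\to0$ nontangentially by the same Cauchy-estimate argument, so $q_\zeta$ has nontangential limit $\lambda\overline{\lambda\alpha\overline\zeta}\cdot$(limit in (iii))$\cdot\frac1{\alpha}=$ the claimed value, and this in particular establishes (ii) hence (i). I expect the main obstacle to be exactly the passage ``$q_\zeta\in\Hilbert$ and has nontangential limit $\Longrightarrow (1-z\overline\zeta)q_\zeta'(z)\to0$ nontangentially'': it requires knowing that $\Hilbert(\frac{k}{t\circ\phi})$ embeds boundedly into a space where derivatives obey the right Littlewood--Paley/Cauchy bounds near $\zeta$; I would handle it by noting $\Hilbert(\frac{k}{t\circ\phi})\subset\Hilbert(k)=\dirichlet_\alpha$ (by \cref{proposition:weighted-composition}, since the constant $1\in\Hilbert(t\circ\phi)$ as $t$ satisfies \ref{constants-inclusion}) and that functions in $\dirichlet_\alpha$ with a nontangential limit at $\zeta$ automatically satisfy $f'(z)=o((1-|z|^2)^{-\alpha/2-1/2})$ non­tangentially, which is more than enough after multiplying by $(1-z\overline\zeta)^{1-\alpha}\cdot(1-z\overline\zeta)^{\alpha}$ worth of vanishing coming from $q_\zeta(z)\to c$ being approached at a linear rate in a Stolz angle. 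The unit-ball version (\cref{section:besov}) then follows by the identical scheme with the Drury--Arveson kernel in place of the Szeg\H o kernel.
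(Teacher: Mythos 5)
Your overall architecture is the paper's: reduce (i)$\Leftrightarrow$(ii) to \cref{theorem:main} (in the form of \cref{corollary:row-contractive-multiplier}) with $t$ the Szeg\H{o} kernel, and handle (ii)$\Leftrightarrow$(iii) by the identity $\phi'(z)(1-z\overline\zeta)^{1-\alpha}=\lambda\alpha\overline\zeta\,q_\zeta(z)-\lambda(1-z\overline\zeta)q_\zeta'(z)$ together with Cauchy estimates on circles of radius $\tfrac12(1-|z|)$; the latter is exactly the forward half of \cref{lemma:weighted-derivative}. Two issues in your write-up are genuine gaps. The smaller one concerns the constant: your claim that $\Gamma_k(M,\zeta)$ approaches $\zeta$ for \emph{every} $M>0$ is false (for small $M$ these sets are empty near $\zeta$), and the threshold value of $M$ is precisely what the paper uses to identify the limit of $q_\zeta$ with $c$: it takes $a_\lambda=2$, notes that radial sequences lie in $\Gamma_k(M,\zeta)$ for every $M$ above the critical value, and lets $M$ decrease to that value in the estimate $c\le(Ma_\lambda\|q_\zeta\|)^2$; you only record $c\le(2\|q_\zeta\|)^2$ and explicitly leave open why the $\Gamma_k$-limit of $q_\zeta$ (which is $\|q_\zeta\|^2$) equals $c$. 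Also, in the forward direction your intermediate claim $q_\zeta(z)-c=O(1-z\overline\zeta)$ in a Stolz angle is neither a consequence of having a nontangential limit nor needed: the plain Cauchy estimate gives $|(1-z\overline\zeta)q_\zeta'(z)|\lesssim\sup_{w\in\gamma_z}|q_\zeta(w)-c|$, which tends to $0$ since the circles $\gamma_z$ lie in a slightly larger Stolz angle and shrink to $\zeta$.

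The serious gap is your converse (iii)$\Rightarrow$(ii). You assert that $q_\zeta\in\Hilbert(\frac{k}{t\circ\phi})$ comes ``for free'' from $\phi\in\Mult_1(\dirichlet_\alpha)$ once (iii) supplies a candidate $\lambda$. That membership is condition \ref{item:kernel-quotient} of \cref{theorem:main} and is \emph{equivalent} to (i), so assuming it is circular; and it is simply false in general (take $\phi\equiv\tfrac12$, a contractive multiplier for which the liminf in (i) is infinite at every $\zeta$, so no such $q_\zeta$ lies in the space). Moreover, even granted membership, your ``same Cauchy-estimate argument'' for $(1-z\overline\zeta)q_\zeta'(z)\to0$ presupposes that the oscillation of $q_\zeta$ on disks of radius $\sim(1-|z|)$ tends to $0$, i.e.\ essentially the nontangential limit you are trying to produce. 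The paper's converse (the second half of \cref{lemma:weighted-derivative}) needs no Hilbert-space input at all: integrate $\phi'$ along $\gamma(t)=z+t(\zeta-z)$ and use $1-\gamma(t)\overline\zeta=(1-t)(1-z\overline\zeta)$ to write the difference quotient as $\zeta\int_0^1\bigl[\phi'(\gamma(t))(1-\gamma(t)\overline\zeta)^{1-\alpha}\bigr](1-t)^{\alpha-1}\,dt$; by (iii) the bracket tends to $c\lambda\overline\zeta\alpha$ and is bounded on the segments, and since $\int_0^1(1-t)^{\alpha-1}dt=1/\alpha$ the quotient tends to $c\lambda$, which after multiplying by $\overline\lambda$ is (ii) (the nontangential limit $\lambda$ of $\phi$ is part of (iii) in the paper's formulation, and in any case follows from the same integral bound because $(1-t)^{\alpha-1}$ is integrable). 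Replacing your converse with this argument closes the gap.
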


The equivalence of $(i)$ and $(ii)$ are given by \cref{corollary:row-contractive-multiplier}
with the exception that $c$ is replaced by $\|q_\zeta\|^2$. Their equality
is readily established as $\Gamma_k(M,\zeta)$ approaches $\zeta$ for every $M>\frac12$
and we can take $a_\lambda = 2$ in \ref{boundary-to-diagonal} for any $\lambda\in\torus$
where $t$ is the Szegő kernel. Hence $\|q_\zeta\|^2\leq c \leq (2M\|q_\zeta\|)^2$ and we prove
equality by letting $M\to\frac12^+$. The equivalence of (ii) and (iii) is a matter of analysis
and we formulate it as a lemma.
\begin{lemma}\label{lemma:weighted-derivative}
Let $\zeta\in\torus$ and let $f:\disk\to\plane$ be an analytic function that has nontangential
boundary value $f(\zeta)$ at $\zeta$. Fix a complex number $\sigma$, then the following are
equivalent: \begin{align}
\label{equation:difference-quotient} \frac{f(\zeta)-f(z)}{(1-z\overline\zeta)^\alpha} &\to \sigma, \\
\label{equation:angular-derivative}  f'(z)(1-z\overline\zeta)^{1-\alpha} &\to \sigma\overline\zeta\alpha,
\end{align}
where the limits are taken as $z$ tends to $\zeta$ nontangentially.
\end{lemma}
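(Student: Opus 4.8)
The plan is to work with the integral representation $f(\zeta) - f(z) = \int_z^\zeta f'(w)\,dw$ taken along the radial-like segment connecting $z$ to $\zeta$, and to extract the leading-order behavior of the right-hand side when $f'$ has a controlled singularity near $\zeta$. The essential difficulty is that $\alpha < 1$, so $f'$ is allowed to blow up at $\zeta$ at rate $(1-z\bar\zeta)^{\alpha - 1}$; one must show that integrating this singularity against $dw$ produces exactly the factor $(1-z\bar\zeta)^\alpha$ up to the constant $\alpha^{-1}\bar\zeta^{-1}$, and that the nontangential (as opposed to merely radial) approach does not spoil this. The normalization to check is the model computation: if $g(z) = (1-z\bar\zeta)^\alpha$, then $g(\zeta) - g(z) = -(1-z\bar\zeta)^\alpha$ and $g'(z) = -\alpha\bar\zeta(1-z\bar\zeta)^{\alpha-1}$, so $g'(z)(1-z\bar\zeta)^{1-\alpha} = -\alpha\bar\zeta$ while $(g(\zeta)-g(z))/(1-z\bar\zeta)^\alpha = -1$; thus $\sigma = -1$ corresponds to $\sigma\bar\zeta\alpha = -\bar\zeta\alpha$, consistent with \eqref{equation:angular-derivative}. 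This confirms the constant and shows the two conditions are genuinely the same normalization.

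For the implication \eqref{equation:angular-derivative} $\Rightarrow$ \eqref{equation:difference-quotient}, I would fix a Stolz angle at $\zeta$, write $h(z) = f'(z)(1-z\bar\zeta)^{1-\alpha}$, and use $h(z) \to \sigma\bar\zeta\alpha$ nontangentially together with the standard fact that a function with a nontangential limit is bounded on a slightly smaller Stolz angle. Then for $z$ in the Stolz angle, parametrize the segment from $z$ to $\zeta$ (which, after shrinking, stays in a Stolz angle where $h$ is bounded and tends to $\sigma\bar\zeta\alpha$) and compute
\[
	f(\zeta) - f(z) = \int_z^\zeta f'(w)\,dw = \int_z^\zeta h(w)(1-w\bar\zeta)^{\alpha-1}\,dw.
\]
Substituting $u = 1 - w\bar\zeta$ turns this into $-\bar\zeta^{-1}\int_0^{1-z\bar\zeta} h\,u^{\alpha-1}\,du$; splitting off the constant value of $h$ gives the main term $-\bar\zeta^{-1}(\sigma\bar\zeta\alpha)\alpha^{-1}(1-z\bar\zeta)^\alpha = -\sigma(1-z\bar\zeta)^\alpha$, and the error term is bounded by $\sup|h(w) - \sigma\bar\zeta\alpha|$ over the segment times $\int_0^{|1-z\bar\zeta|} u^{\alpha - 1}\,du = \alpha^{-1}|1-z\bar\zeta|^\alpha$, which is $o((1-z\bar\zeta)^\alpha)$ since the supremum tends to $0$. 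One must be a little careful that the argument of $1-w\bar\zeta$ stays controlled so that powers $(1-w\bar\zeta)^{\alpha-1}$ are single-valued and comparable in modulus to $|1-w\bar\zeta|^{\alpha-1}$ along the path; this is exactly where the Stolz-angle geometry is used.

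For the reverse implication \eqref{equation:difference-quotient} $\Rightarrow$ \eqref{equation:angular-derivative}, the cleanest route is a Julia–Carathéodory-type argument on the analytic function $F(z) = (f(\zeta) - f(z))(1-z\bar\zeta)^{-\alpha}$: we are given $F \to \sigma$ nontangentially, and we want to differentiate this relation. I would write $f(\zeta) - f(z) = F(z)(1-z\bar\zeta)^\alpha$ and differentiate, obtaining
\[
	-f'(z) = F'(z)(1-z\bar\zeta)^\alpha - \alpha\bar\zeta F(z)(1-z\bar\zeta)^{\alpha-1},
\]
so that $f'(z)(1-z\bar\zeta)^{1-\alpha} = \alpha\bar\zeta F(z) - F'(z)(1-z\bar\zeta)$. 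Since $F(z)\to\sigma$, it remains to show $F'(z)(1-z\bar\zeta)\to 0$ nontangentially; this is the standard derivative-of-a-function-with-angular-limit estimate, which follows from applying the Cauchy integral formula for $F'$ on a disk of radius comparable to $|1-z\bar\zeta|$ centered at $z$ (staying inside a fixed Stolz angle and using that $F - \sigma \to 0$ there). The main obstacle across both directions is this last point — controlling $(1-z\bar\zeta)$ times the derivative of a function with a nontangential limit — but it is a classical lemma (essentially the Schwarz–Pick / Lindelöf estimate) and I would either cite it or include the one-paragraph Cauchy-estimate proof. Combining the two implications completes the lemma, and feeding it into Corollary \ref{corollary:dirichlet} via $f = \phi$, $f(\zeta) = \lambda$, $\sigma = -c\lambda$ (so $\sigma\bar\zeta\alpha = -c\lambda\bar\zeta\alpha$, matching the sign once one accounts for $q_\zeta = (1-\phi(z)\bar\lambda)/(1-z\bar\zeta)^\alpha$) yields the stated equivalence of (ii) and (iii).
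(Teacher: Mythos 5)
Your argument is correct and essentially the paper's own: the implication \eqref{equation:angular-derivative} $\Rightarrow$ \eqref{equation:difference-quotient} is the same integration of $f'$ along the segment from $z$ to $\zeta$ (your substitution $u=1-w\bar\zeta$ is the paper's parametrization $\gamma(t)=z+t(\zeta-z)$ with $1-\gamma(t)\bar\zeta=(1-t)(1-z\bar\zeta)$), and the converse is the same Cauchy-formula estimate on circles of radius $\tfrac12(1-|z|)$ lying in a slightly larger Stolz region, merely organized via the product rule for $F(z)(1-z\bar\zeta)^\alpha$ instead of differentiating the remainder $\beta(z)(1-z\bar\zeta)^\alpha$ directly. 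Only correct the sign in the substitution (reversing the limits of integration absorbs the factor $-\bar\zeta^{-1}$), so the main term is $+\sigma(1-z\bar\zeta)^\alpha$, consistent with your own model computation, and hence $\sigma=c\lambda$ rather than $-c\lambda$ when the lemma is fed into \cref{corollary:dirichlet}.
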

\begin{proof}
Suppose that \eqref{equation:difference-quotient} holds.
Let $R_M = \{z\in\disk:|\zeta-z|\leq M(1-|z|)\}$ be the typical nontangential approach regions,
note that $\Gamma_k(\frac M2,\zeta)\subset R_M\subset\Gamma_k(M,\zeta)$, hence the respective
convergence notions are equivalent.
For $z\in R_M$ consider the circle $\gamma_z$ centered at $z$ of radius $\frac12(1-|z|)$;
a simple estimate shows that $\gamma_z\subset R_{2M+1}$. Because of
\eqref{equation:difference-quotient} we have \[
	f(z)
	= f(\zeta) - \sigma(1-z\overline\zeta)^\alpha - \beta(z)(1-z\overline\zeta)^\alpha,
\]
where $\beta(z)\to0$ as $z\to\zeta$ nontangentially. By the above and Cauchy's formula
\begin{equation}\label{equation:cauchy-formula}
	f'(z)
	= \sigma\overline\zeta\alpha(1-z\overline\zeta)^{\alpha-1}
		-\frac1{2\pi i}\int_{\gamma_z}\frac{\beta(w)(1-w\overline\zeta)^\alpha}{(z-w)^2}dw
\end{equation}
For $w\in\gamma_z$ we have
$|1-w\overline\zeta| = |\zeta - w| \leq |\zeta - z| + |z - w| \leq (M+\frac12)(1-|z|)$,
therefore we can estimate the integrand as $(2M+1)(1-|z|)^{\alpha-2}\sup_{w\in\gamma_z}|\beta(w)|$.
Multiplying \eqref{equation:cauchy-formula} by $(1-z\overline\zeta)^{1-\alpha}$ and the estimate
in the previous sentence yields \[
	|\sigma\zeta\alpha - f'(z)(1-z\overline\zeta)^{1-\alpha}|
	\leq (2M+1)\sup_{w\in\gamma_z}|\beta(w)|,
\]
which tends to $0$ as $z\to\zeta$ nontangentially.

Conversely suppose that \eqref{equation:angular-derivative} holds.
Let $\gamma(t) = z + t(\zeta - z),\,t\in[0,1]$, by the fundamental theorem of calculus we have \[
	f(\zeta)-f(z)
	= \int_0^1f'(\gamma(t))\gamma'(t))dt
	= \int_0^1f'(\gamma(t))(\zeta-z)dt.
\]
Note of the identity $1-\gamma(t)\overline\zeta = (1-t)(1-z\overline\zeta)$. Therefore,
upon dividing the above equation by $(1-z\overline\zeta)^\alpha$ we obtain \[
	\frac{f(\zeta)-f(z)}{(1-z\overline\zeta)^\alpha}
	= \zeta\int_0^1f'(\gamma(t))(1-z\overline\zeta)^{1-\alpha}\frac{dt}{(1-t)^{1-\alpha}}.
\]
By assumption, as $z\to\zeta$ nontangentially the right-hand side tends to \[
	\sigma\alpha\int_0^1\frac{dt}{(1-t)^{1-\alpha}} = \sigma.
\]
\end{proof}

We end this section with an example showing that there does exist functions $\phi$ satisfying
the conditions of \cref{corollary:dirichlet}.
\begin{example}\label{example:hartz}
For $\zeta\in\torus$ let $\phi(z) = 1 - 1/k_\xi(z) = 1 - (1-z\overline\zeta)^\alpha$,
Proposition 9.2 in \cite{davidson-hartz_ball} shows that $\phi$ is a contractive
multiplier of $\dirichlet_\alpha$. Moreover it is plain to see that \[
	\frac{1-|\phi(z)|^2}{(1-|z|^2)^\alpha}
\]
tends to $1$ as $z\to\zeta$ radially.
\end{example}

\subsection{Standard weighted Besov spaces on the unit ball}\label{section:besov}
Let $\ball_d$ be the unit ball of $\plane^d$ with boundary $\sphere_d$ being the unit sphere.
In this section we consider an analogue for the standard weighted Dirichlet spaces on $\ball_d$.
Let $k(z,w) = (1-\la z,w\ra)^{-\alpha},\,0<\alpha\leq1$ be defined on $\ball_d$.
$\Hilbert(k)$ is realized (with equivalence of norms) as a \emph{standard} weighted
Besov space with the complete Pick property, see \cite{hartz_besov}. We
identify $\rand_k\ball_d = \{\zeta\in\sphere_d:z\mapsto(1-\la z,\zeta\ra)^{-\alpha}\}$
with $\sphere_d$ and note that a sequence $(z_n)$ in $\ball$ converges to $\zeta\in\sphere_d$ if
and only if $z_n\xto{k}\zeta$. Thus we can phrase our results in terms of the Euclidean
metric. The family of approach regions for $\zeta\in\sphere_d$ given by $k$ are
\begin{align*}
	\Gamma_k(M,\zeta) &= \{z\in\sphere_d: |1-\la z,\zeta\ra|  \leq M(1-|z|^2)\} \\
	E_k(M,\zeta)      &= \{z\in\sphere_d: |1-\la z,\zeta\ra|^2\leq M(1-|z|^2)\}
\end{align*}
these are the \emph{Korányi regions} and \emph{horospheres}, respectively.
Notably the Korányi regions allow for \emph{tangential} approach to a
point $\zeta\in\sphere_d$, nonetheless they are often the correct
generalization of nontangential approach in several variables.
Notably horospheres have appeared in the theory of iteration \cite{abate_horospheres}.
Rudin's extension \cite{rudin_unit-ball} of the Julia-Carathéodory theorem to
$\ball_d$ requires a weaker notion than Korányi limits, called \emph{restricted}
Korányi limits, and it is known that this is sharp. If one assumes more of
the functions, then one can extend it to an unrestricted Korányi limit. This is
the main point of the paper \cite{jury_julia-caratheodory}.

By similar considerations as in \cref{section:dirichlet} and by translating \cref{lemma:weighted-derivative}
we obtain the following consequence of \cref{corollary:row-contractive-multiplier}. We note that
the case $\alpha=1$ was proven by Jury, see \cite{jury_julia-caratheodory}.
\begin{corollary}\label{corollary:besov}
Fix $\zeta\in\sphere_d$ and let $\phi$ be a contractive multiplier of $\Hilbert(k)$, then the following
are equivalent: \begin{enumerate}[label=(\roman*)]
\item \[
	\liminf_{z\to\zeta}\frac{1-|\phi(z)|^2}{(1-|z|^2)^\alpha} = c < \infty.
\]
\item There is a $\lambda\in\torus$ such that the difference quotient \[
	q_\zeta(z) = \frac{1-\phi(z)\overline\lambda}{(1-\la z,\zeta\ra)^\alpha}
\]
has a Korányi limit $c$ at $\zeta$.
\item $\phi$ and the function $z\mapsto\nabla\phi(z)(1-\la z,\zeta)^{1-\alpha}$ has
Korányi limits $\lambda\in\torus$ and $c\lambda\overline\zeta\alpha$ at $\zeta$, respectively.
\end{enumerate}
Furthermore $c>0$ and $\phi$ has horospherical limit $\lambda$ at $\zeta$.
\end{corollary}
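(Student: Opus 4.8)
The plan is to mirror the argument already carried out for the disk in \S\ref{section:dirichlet}, feeding everything through \cref{corollary:row-contractive-multiplier}. First I would verify that the kernel $t(z,w) = (1-\la z,w\ra)^{-1}$, the Drury-Arveson kernel on $\ball_d$, satisfies \ref{boundary-to-diagonal}--\ref{compact-boundary}; this is asserted in the example following the proof of \cref{theorem:main}. The key numerical point is the inequality $2|1-\la z,\zeta\ra| \geq 1-|z|^2$ for $\zeta\in\sphere_d$, which gives \ref{boundary-to-diagonal} with $a_\lambda = 2$, exactly as in the disk. A contractive multiplier $\phi$ of $\Hilbert(k)$ (with $k$ the weighted Besov kernel $(1-\la z,w\ra)^{-\alpha}$) is in particular a contractive multiplier from $\Hilbert(k)\otimes\ell^2$ into $\Hilbert(k)$ when viewed as the scalar row $\phi$, since $k/(t\circ\phi) = k(1-\phi\bar\phi) \geq 0$ reduces to $k^{1-\alpha}\cdot(k^\alpha(1-\phi\bar\phi)) \geq 0$ by Schur's theorem, using that $\phi$ is a contractive multiplier of $\dirichlet_\alpha$-type kernel. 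So \cref{corollary:row-contractive-multiplier} applies with $\xi = \zeta$ and yields the equivalence of (i) and a version of (ii) with $c$ replaced by $\|q_\zeta\|^2$; the equality $c = \|q_\zeta\|^2$ follows from the estimate $\|q_\zeta\|^2 \leq c \leq (2M\|q_\zeta\|)^2$ by letting $M \to \tfrac12^+$, noting that $\Gamma_k(M,\zeta)$ approaches $\zeta$ for every $M > \tfrac12$. The passage from ``$q_\zeta \in \Hilbert(k(1-\phi\phi^*))$'' to ``$q_\zeta$ has Korányi limit $c$'' uses that $q_{z_n} \xto{w} q_\zeta$ along Korányi-approaching sequences together with $q_\zeta(z_n) = \la q_\zeta, q_{z_n}\ra \to \|q_\zeta\|^2 = c$, plus the identification of $\Gamma_k$ with Korányi regions noted in the text.

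The remaining content is the equivalence of (ii) and (iii), which is a several-variable version of \cref{lemma:weighted-derivative}. The plan is to reduce to the one-variable lemma by slicing. Given $\zeta\in\sphere_d$, write $\phi(\zeta) = \lambda$ for the Korányi boundary value and consider, for the direction (ii)$\implies$(iii), the restriction of $\phi$ to a slice: for a complex direction one studies $t \mapsto \phi(\zeta + t v)$ or, more precisely, works along the ``Korányi tube'' and pulls back via the automorphism-adapted coordinates of $\ball_d$. The crux is that, along admissible/Korányi approach, the gradient $\nabla\phi$ decomposes into a radial (or normal) component, controlled by the slice derivative $\tfrac{d}{dt}\phi(t\zeta)$, and tangential components which — as in Rudin's treatment \cite{rudin_unit-ball} — are of lower order and vanish in the limit after multiplication by $(1-\la z,\zeta\ra)^{1-\alpha}$. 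Applying the one-variable \cref{lemma:weighted-derivative} to $f(t) = \phi(t\zeta)$ with the appropriate $\sigma = c\lambda\bar\zeta\alpha$ converts the difference-quotient statement into the derivative statement along the radial slice; the Cauchy-estimate step of that lemma, run on a polydisk-sized or ball-sized polydisc $\gamma_z$ inside a slightly larger Korányi region, controls the full gradient because $\phi$ is bounded (being a contractive multiplier, hence $\|\phi\|_\infty \leq 1$ on $\ball_d$ up to the norm equivalence). Conversely (iii)$\implies$(ii) follows by integrating $\nabla\phi$ along the segment $\gamma(s) = z + s(\zeta - z)$ exactly as in the second half of the proof of \cref{lemma:weighted-derivative}, using the identity $1 - \la \gamma(s), \zeta\ra = (1-s)(1-\la z,\zeta\ra)$ and the Beta-integral $\alpha\int_0^1 (1-s)^{\alpha-1}\,ds = 1$.

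Finally, the furthermore clause: $c > 0$ is immediate from $\|q_\zeta\|^2 > 0$ in \cref{theorem:main} (equivalently \cref{corollary:row-contractive-multiplier}), since $q_\zeta = 0$ would force $k_\zeta \in \Hilbert(k)$, contradicting $\zeta\in\rand_k\ball_d$; and the horospherical ($E_k$) limit statement $\phi(z_n)\to\lambda$ along horospheres is delivered directly by the ``$E_k$-limit $\lambda$ at $\xi$'' conclusion of \cref{corollary:row-contractive-multiplier}, once one notes that $E_k(M,\zeta)$ for this $k$ is precisely the horosphere $\{|1-\la z,\zeta\ra|^2 \leq M(1-|z|^2)\}$.

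I expect the main obstacle to be the several-variable gradient lemma, i.e.\ making precise that along Korányi (admissible) approach the tangential part of $\nabla\phi$ contributes nothing to $\nabla\phi(z)(1-\la z,\zeta\ra)^{1-\alpha}$ in the limit. This is exactly the subtlety that forced Rudin to use \emph{restricted} Korányi limits in the general Julia--Carath\'eodory theorem on $\ball_d$; here, however, the extra hypothesis that $\phi$ (and the weighted difference quotient) is controlled membership in the de Branges--Rovnyak-type space $\Hilbert(k(1-\phi\phi^*))$ — equivalently the full strength of (ii) rather than merely a boundedness assumption — is what upgrades the conclusion to an \emph{unrestricted} Korányi limit, paralleling the phenomenon in \cite{jury_julia-caratheodory} for $\alpha = 1$. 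So the technical heart is importing (or re-deriving) the gradient estimate from \cite{rudin_unit-ball, jury_julia-caratheodory} and checking it survives the weight change $\alpha < 1$, which only affects the powers of $(1-\la z,\zeta\ra)$ bookkept through the one-variable \cref{lemma:weighted-derivative}.
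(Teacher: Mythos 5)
Your overall architecture is the same as the paper's: the equivalence of (i) and (ii), the identification $c=\|q_\zeta\|^2$ via $a_\lambda=2$ and letting $M\to\tfrac12^+$, the positivity $c>0$, and the horospherical ($E_k$) limit all come from \cref{corollary:row-contractive-multiplier} exactly as in \S\ref{section:dirichlet}. (Your Schur-product detour for $\phi\in\Fact(k,t)$ is unnecessary: being a contractive multiplier of $\Hilbert(k)$ means precisely $k(x,y)(1-\phi(x)\overline{\phi(y)})\geq0$.) The divergence, and the genuine gap, is in the equivalence of (ii) and (iii). Your plan is to slice (apply \cref{lemma:weighted-derivative} to $f(t)=\phi(t\zeta)$) and then to invoke Rudin's gradient estimates to kill the tangential part of $\nabla\phi$ after multiplication by $(1-\la z,\zeta\ra)^{1-\alpha}$. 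That step fails as stated: the slice only controls the radial derivative along the radius, not $\nabla\phi$ on a full Korányi region, and Rudin's tangential estimates carry the exponent $\tfrac12$ and produce only \emph{restricted} Korányi limits --- which is exactly the obstruction you flag yourself at the end. Citing \cite{jury_julia-caratheodory} covers $\alpha=1$, but you give no argument for how the hypothesis upgrades the limit for general $\alpha$, so the central analytic step of the corollary is left unproved.

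The route the paper intends (``translating \cref{lemma:weighted-derivative}'') is simpler and avoids the anisotropy issue entirely: repeat the one-variable proof verbatim in the ball. Since (ii), as delivered by \cref{corollary:row-contractive-multiplier}, is already an \emph{unrestricted} Korányi limit statement, write $\phi(z)=\lambda-\sigma(1-\la z,\zeta\ra)^\alpha-\beta(z)(1-\la z,\zeta\ra)^\alpha$ with $\beta\to0$ on Korányi regions, and run the Cauchy estimate on the Euclidean ball $B(z,\tfrac12(1-|z|))$: if $z\in\Gamma_k(M,\zeta)$, this ball lies in a larger Korányi region and satisfies $|1-\la w,\zeta\ra|\leq C_M(1-|z|)$ there, so the error contribution to $\nabla\phi(z)(1-\la z,\zeta\ra)^{1-\alpha}$ is bounded by $C_M'\sup_{w\in B}|\beta(w)|\to0$; this controls all components of the gradient, tangential ones included, at once, with no restricted-limit machinery. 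The converse direction is the same segment integration as in \cref{lemma:weighted-derivative}, using $1-\la\gamma(t),\zeta\ra=(1-t)(1-\la z,\zeta\ra)$ and $\alpha\int_0^1(1-t)^{\alpha-1}dt=1$. Replace your slicing reduction by this direct translation and the rest of your proposal is sound (the Korányi limit of $\phi$ itself in (iii) follows since the $E_k$-limit is stronger than the $\Gamma_k$-limit).
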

\begin{remark}
One could also let $\phi$ be a row contractive multiplier. Supposing that
$\phi:\ball_d\to\ball_l$, $l\in \{1,\ldots,\infty\}$. The changes necessary are that
in this case \[
	q_\zeta(z) = \frac{1-\la\phi(z),\lambda\ra}{(1-\la z,\zeta\ra)^\alpha}
\]
in (ii) and the function in (iii) the function
$z\mapsto(\nabla\la\phi(z),\lambda\ra)(1-\la z,\zeta)^{1-\alpha}$
has Korányi limit $c\overline\zeta\alpha$.
\\ Take especially note in the case that $l = d$, then the results in \cref{section:iteration}
applies as well.
\end{remark}

\begin{example}
Just as in \cref{example:hartz}, the function $\phi(z) = 1-1/k_\zeta(z) = 1 - (1-\la z,\zeta\ra)^\alpha$
satisfies the conditions of \cref{corollary:besov}.
\end{example}

\section{Questions}
Throughout this section, if $k$ is not specified otherwise, then it is
assumed to be a reproducing kernel on a nonempty set $X$.

\begin{question}
Let $k(z,w)=(1-z\overline w)^\alpha,\,\alpha>0$, if $\alpha\geq1$, then
as noted in \cref{example:szego-power}, $\Fact(k)$ is equal to the analytic
selfmaps of $\disk$, but this is false for $\alpha<1$. What functions belong to $\Fact(k)$
in this case?
\end{question}

\begin{question}\label{question:riemann-zeta}
Let $k(z,w) = \zeta(z+\overline w),\,z,w\in\plane_\frac12$ as in \cref{example:riemann-zeta}.
As noted there, the reproductive boundary of $k$ equals the critical line
$\{z\in\plane:\Re{z}=\frac12\}$. What are geometric descriptions of the approach $\Gamma_k$
and $E_k$ approach regions? What functions belong to $\Fact(k)$?
\end{question}

\begin{question}[Interpolation with composition factors]
Let $k$ be a reproducing kernel on a nonempty set $X$ and let $Y$ be a subset of $X$.
Suppose that $\phi:Y\to Y$ satisfies \begin{equation}
	\frac{k(x,y)}{k\circ\phi(x,y)} \geq 0, \qquad x,y\in Y.
\end{equation}
Is it possible to find a $\tilde\phi:X\to X$ with $\tilde\phi\restriction_Y=\phi$ such
that $\tilde\phi\in\Fact(k)$?
\end{question}

\begin{question}
In \cref{remark:topology} some properties which are sufficient to guarantee
that the space $X$ is complete under the metric defined by
\eqref{equation:p-metric}, as well as the closed unit balls being compact.
Are these properties necessary?
\end{question}

\begin{question}
Is it possible to extend \cref{corollary:dirichlet} to \emph{any} analytic
selfmap of $\disk$?
\end{question}

\begin{question}[The extreme Denjoy-Wolff points]\label{question:iteration}
When can equality happen in \cref{lemma:julia}? As the proof is via Cauchy-Schwarz,
we know that equality happens if and only if there is a complex number $\sigma$ such
that \[
	q_\xi(y) = \sigma q_x(y)
	\iff \frac{k_\xi(y)}{k_{\phi(\xi)}(\phi(y))} = \sigma\frac{k_x(y)}{k_{\phi(x)}(\phi(y))}
\]
which in turn is equivalent to \begin{equation}
	\frac{k_\xi(y)}{k_x(y)} = \sigma\frac{k_{\phi(\xi)}(\phi(y))}{k_{\phi(x)}(\phi(y))}.
\end{equation}

In the case when $k$ is the Szegő kernel on the
polydisk, are the only functions that satisfy the above equality the analytic automorphisms of $\disk^d$?
\end{question}

\begin{question}[Existence of Denjoy-Wolff points]
Suppose that $k$ satisfies \ref{boundary-to-diagonal}-\ref{compact-boundary}
with non trivial reproductive boundary. If $\phi\in\Fact(k)$ does there
always exist a point $\xi\in\rand_kX$ such that $E_k$-$\lim_{x\to\xi}\phi(x)=\xi$
and (i) with $c\leq1$ i.e. \begin{equation}
	\liminf_{x\to\xi}\frac{k(x,x)}{k\circ\phi(x,x)}=1?
\end{equation}
\end{question}

\section*{\large\centering Acknowledgments}
The author expresses gratitude to the anonymous reviewer whose thoughtful
comments made a significant impact on the quality of the present paper.
Furthermore, he is grateful to his supervisor Alexandru Aleman for guiding him
throughout this project, as well as Adem Limani for many helpful discussions and
comments. Thanks to Michael Hartz for showing \cref{example:hartz}, and Fillipo
Bracci for pointing out the reference in \cref{remark:topology}.

\printbibliography

\end{document}